\theoremstyle{plain}
\newtheorem{thm}{Theorem}[subsection]
\newtheorem{prop}{Proposition}[subsection]
\newtheorem{remk}{Remark}[subsection]
\newtheorem{defi}{Definition}[subsection]
\newtheorem{exam}{Example}[subsection]
\newtheorem{lemma}{Lemma}[subsection]
\newtheorem{cor}{Corollary}[subsection]
\begin{document}
\title{Codes for Square-Tiled Surfaces}
\author{Kuo-Chiang Tan}
\address{Department of mathematic, Indiana University, Bloomington IN 47405, USA}
\email{kutan@indiana.edu}

\thanks{}


\keywords{Square-tiled surfaces, Veech group}

\date{}
\maketitle
\begin{abstract}
In this paper,  we use permutation elements to record cylinder decompositions of a square-tiled surface $X$. Collecting  all such possible permutation elements that record cylinder decompositions, we can enumerate the $SL_2(\mathbb{Z})$ orbit of a given surface $X$ and give a method to determine whether or not a matrix $A\in SL_2(\mathbb{Z})$ is the differential of an affine diffeomorphism of $X$.
\end{abstract}

\section{Introduction}
 A square-tiled surface is a topological surface obtained by gluing together n squares labeled $\{1,2,\ldots,n\}$ in a horizontal and vertical fashion. The horizontal (resp. vertical) gluings are recorded by a permutation $\sigma\in S_n$ (resp. $\tau\in S_n$). (See $\S 2$ for more details.) Square-tiled surfaces were introduced in W. Thurston's work [17] and can be regarded as ``rational points" in the moduli space of the moduli space of all 1-forms that are holomorphic with respect to some complex structure.  Overviews and further properties can be found, e.g., in [1], [3], [4], [5], [7], [8], [11], [16] and [18].

 This paper is devoted to the study of the cylinder decompositions of a given square-tiled surface $X=X(\sigma,\tau)$ as well its Veech group, $Veech(X)$ which is the subgroup of $SL_2(\mathbb{R})$ consisting of all differentials of affine diffeomorphisms and denoted by $Veech(X)$. Using the permutations $\sigma$, $\tau$ and cutting sequences of saddle connections, we encode the combinatorics of the saddle connections (resp. cylinder decomposition) in the direction $(a,b)$ with an element $\mathsf{Code}(a,b)\in S_n$ (resp. $\mathsf{Code}^{L}(a,b)\in S_n$). The permutation  $\mathsf{Code}^{L}(a,b)$ is called  the left code of the cylinder decomposition of $X$ in the direction $(a,b).$  Given a $2\times 2$ matrix $A$ with non negative integer entries and determinant 1, we define $\mathsf{Code}^{L}(A)$ to be the pair of left codes of its column vectors. Given such two matrices $A$ and $B$, we say the $\mathsf{Code}^L(A)$ is conjugate to $\mathsf{Code}^{L}(B)$ if and only if there exists  $\omega\in S_n$ so that the left code of the first (resp. second) column of A is conjugate to the left code of the first (resp. second) column of B by $\omega.$ Then the following theorem is one of our main results:

\begin{enumerate}
\item[]{\textit{\textbf{Theorem 6.1.1.}}}
\textit{ A $2\times 2$ matrix $A$  with non negative integer entries and determinant 1 belongs to Veech($X$) if and only if $\mathsf{Code}^L(A)$ is conjugate to $\mathsf{Code}^L(I)$ .}
\end{enumerate}
For the collection of all surfaces with a translation structure, there is an $SL_2(\mathbb{Z})$ action on this collection (see $\S$2).
Let $S^+(X)$ be the set
\begin{align*}\{X(\omega_1,\omega_2)|(\omega_1,\omega_2)=&\mathsf{Code}^L(A),\mbox{ for some matrix A}\\
&\mbox{ with non negative entries and determinant 1}\}.\end{align*} Then we have:

\begin{enumerate}
\item[] {\textit{\textbf{Theorem 6.2.1.}}}
\textit{Let $X$ be a square-tiled surface, then the index of the Veech group in $SL_2(\mathbb{Z})$ is $|S^+(X)|.$}
\end{enumerate}

The proof of theorem 6.1.1 is based on the following idea: for any $A=[a_{ij}]$ in Theorem 6.1.1, the saddle connections whose directions are given by the first and second column vectors induce a tiling of the surface by the parallelogram spanned by these column vectors.  The left codes of the vectors
 record how we tiled these parallelograms to get the surface $X$. If these left codes are conjugate to the left codes of $(1,0)$ and $(0,1)$ by a permutation $\omega\in S_n,$ we can define an affine diffeomorphism of $X$ with the derivative $A.$ This proves the theorem 5.1.1.

To prove Theorem 6.2.1, since $SL_2(\mathbb{Z})$ can be generated by finitely many matrices with non negative entries and there is an $SL_2^+(\mathbb{Z})$ action on $S^+(X)$, we can conclude that $S^+(X)$ equals the $SL_2(\mathbb{Z})$ orbit of $X$.

We begin this paper by giving the background that we need in $\S 2$. In $\S$3, we define the cutting sequences and codes of saddle connections and discuss the relation between them; in the rest of this section, we define the left codes to record cylinder decompositions on $X$. In $\S$4, we briefly discuss the algebraic properties of cutting sequence and the left codes. In $\S$4.1, we apply the property called Farey addition to the left codes and this property makes left codes easy and systematical to compute. The relations between slopes and left codes will be discussed in $\S$4.2 and $\S$4.3. In $\S$5.1, the closed system of $X$, the main tool in this paper, is introduced and we also discuss some basic algebraic properties of the closed systems.
In $\S$5.2, we will study the $SL_2^+(\mathbb{Z})$( the collection of all matrices with non negative entries and determinant 1) action on closed system. In $\S$5.3, we consider the set consisting of all left codes of matrices in $Veech(X)\cap SL_2^{+}(\mathbb{Z}).$  Applying the results in $\S$5.2, we give a group structure on this set and it is isomorphic to $G_X/S_X$, where $G_X=\{\omega\in S_n|\mbox{ }\omega\Omega_X\omega^{-1}=\Omega_X\}$ and
$S_X=\{\omega\in S_n|\mbox{ }\omega\mathsf{Code}^L(I)\omega^{-1}=\mathsf{Code}^L(I)\}.$  In $\S$6, we end this paper with the applications of closed systems. Applying the closed systems, we obtain the main results, Theorem 6.1.1 and 6.1.2.

\section{Background}
Let us start this paper by introducing the followings: \\

\textbf{Permutation groups.} Let $S_n$ be the collection of injective functions of the set $\{1,2,\ldots,n\}$ and for elements $\sigma$ and $\tau$ in $S_n$, we defined the element $\sigma\cdot\tau$ by applying $\sigma$ first and then applying $\tau.$ $S_n$ together with this operation forms a group and is group isomorphic to the group $(S_n,\circ)$ whose group operation is composition of functions. In this paper, we call $(S_n,\cdot)$ a permutation group and any element in it is called a permutation. For convenience, the element $\sigma\tau$ means the permutation $\sigma\cdot\tau.$\\

\textbf{Square-tiled surfaces $X(\sigma,\tau)$.} Let $\{Q_i|i=1,\ldots ,n\}$ be n's unit squares and $\sigma$ and $\tau$ be elements in the permutation group $S_n$. The square-tiled surface $X=X(\sigma,\tau)$ is the surface obtained by gluing the right (resp. top) sides of the square $Q_i$ to the left (resp. bottom) side of $Q_{\sigma(i)}$ (resp. $Q_{\tau(i)}$). Two square-tilde surfaces $X(\sigma,\tau)$ and $X(\sigma',\tau')$ are equivalent if $\sigma$ and $\tau$ is conjugate to $\sigma'$ and $\tau'$ by a permutation $\omega$ respectively.

  Since there is a canonical branch covering from $X$ to a unit square, if $Z={p_1,p_2,\ldots ,p_k}$ is the collection of all branch points of $\pi,$ then
  \begin{enumerate}
  \item There is a translation structure on $X\setminus Z$, i.e., the transition functions are translations.
  \item a holomorphic 1-form $\omega=\pi^\ast dz$ defined on $X$ with the zero set $Z$
  \item $\omega$ locally can be expressed by $dz$ on $X\setminus Z$ and $z^{n_{i}}dz$( $n_i>0$) on a neighborhood of each zero $p_i$.
  \end{enumerate}
    We call each $p_i$ a critical point of $X$ with the cone angle $n_i+1>1$.  A point on $X$ is called marked point if it is identified by the corner of some $Q_i.$  Any critical point is a marked point. Suppose $\{q_i|i=1,\ldots,n\}$ are the collection of all southwest corner of the square $Q_i$ and $P$ is the projection from squares $Q_i$ to $X$. Consider the permutation $\Theta:=\sigma^{-1}\tau^{-1}\sigma\tau=\Theta_1\Theta_2\ldots \Theta_m$ where $\Theta_1,\Theta_2,\ldots,\Theta_m$ are disjoint cycles, then $P(q_i)$ is a critical point if and only  $\Theta_k(i)\neq i$ for some $k$. Moreover, its cone angle is ord$(\Theta_k).$\\

\textbf{Saddle connections.} Saddle connections are the "straight" line segments on square-tiled surfaces whose end points are marked points. Since there is a translation structure on $X$, we can defined the slope and the direction of any line segment. For any saddle connection, its slope is (extended) rational number $\frac{p}{q}$ and the direction is $(q,p)$ or $(-q,-p)$ ( the integers $q$ and $p$ are either coprime or the pair $(q,p)$ is in $\{\pm(1,0),\pm(0,1),(0,0)\}$.).\\

\textbf{Cylinder decompositions of square-tiled surfaces.} Let $X=X(\sigma,\tau)$ be a given square-tiled surface and $m$ be any extended rational number. Then the compliment of of all saddle connections with slopes $m$ is the disjoint union of Euclidean open cylinders. For the horizontal( resp. vertical) direction, the number of the cylinders in its cylinder decomposition is the number of disjoint cycles of $\sigma$( resp. $\tau$).\\

\textbf{Affine diffeomophisms and Veech groups.} Let $X$ be a square-tiled surface. An affine diffeomorphism of $X$ is an orientation preserving homeomorphism $f$ on $X$ such that
\begin{enumerate}
\item $f$ leaves the zero set of $dz$ invarient.
\item $f$ is diffeomorphic on the surface $(X\setminus\{p_1,p_2,\ldots,p_k\}$ and its derivative is an matrix is $SL_2(\mathbb{Z})$ (with respect to the translate structure induced by $\omega$).
\end{enumerate}
For any square-tiled surface $X$, there are two affine diffeomorphisms are known: the horizontal and vertical Dehn twists. These are the Dehn twists with respect to the horizontal and vertical cylinder decompositions.

 The collection of the derivatives of affine diffeomorphisms on $X$ is called the Veech group of $X$ and denoted by Veech($X$). The group is not trivial since the derivative of horizontal and vertical Dehn twists on $X$ are of the forms:
\[\left(
    \begin{array}{cc}
      1 & h \\
      0 & 1 \\
    \end{array}
  \right),
  \left(
    \begin{array}{cc}
      1 & 0 \\
      v & 1 \\
    \end{array}
  \right)
\] where $h$ and $v$ are the orders of permutations $\sigma$ and $\tau$, respectively.\\

\textbf{Action of $SL_2(\mathbb{Z})$ on translation surfaces.}
A surface is called a translation surface if there is a translate structure $\{(U,f_u)\}$ defined on $X\setminus P$ where $P$ is a finite set. For any element $A\in SL_2(\mathbb{Z})$, the action $A\{(U,f_u)\}$ is defined by $\{(U,A\circ f_u)\}$. This gives a new translate structure on the surface $X\setminus P$. For a square-tiled surface $X$, Veech($X$) is the stabilizer of the surface $X$ for the action of $SL_2(\mathbb{Z}).$

\section{Cutting sequences and codes }

   Let $X=X(\sigma,\tau)$ be a square-tiled surface tiled by $n'$s squares $Q_i$, $i\in\{1,2,\ldots,n\}.$ We say $\frac{p}{q}$ is a rational number if $(q,p)$ is either a coprime pair of positive integers or in the set $\{(0,1),(1,0)\}.$  Let $\left(\frac{p}{q}\right)$ be a line segment in $\mathbb{R}^2$ with end points $(0,0)$ and $(q,p)$ and its slope is the rational number $\frac{p}{q}$.

\subsection{Cutting sequences and codes of saddle connections}
Assume the rational number $\frac{p}{q}$ is not 0,1 and $\infty$. Define $w_0:=(0,0)$ and $w_i=(x_i,y_i)$ is the i-th point on $\left(\frac{p}{q}\right)$ in the direction $(q,p)$ such that either $x_i$ or $y_i$ is an integer.  We define a function $f:\{w_i:i=1,\ldots,{p+q-2}\}\rightarrow\{x,y\}$ by $f(w_i)=x$ ($y$) if $x_i$ ($y_i$) is an integer.
\begin{defi}
 For any rational number $\frac{p}{q}$, the cutting sequence of line segment $\left(\frac{p}{q}\right)$ is the word defined by
 $$\mathsf{Cut}\left(\frac{p}{q}\right):=\left\{
                                           \begin{array}{llll}
                                             f(w_1)f(w_2)\ldots f(w_{p+q-2}) & \hbox{r} \notin\{0,1,\infty\} ; \\
                                             x^{-1} & r=0;\\
                                             e & r=1;\\
                                             y^{-1} & r=\infty.
                                           \end{array}
                                         \right.
$$

\end{defi}

For convenience, the notation $x^n$ means $f(w_i)=x$ for some $i=j,\ldots,j+n-1$. For instance, $\mathsf{Cut}\left(\frac{2}{5}\right)=xxyxx=x^2yx^2$ .
\begin{figure}[h]
\centering
\includegraphics[width=0.6\textwidth]{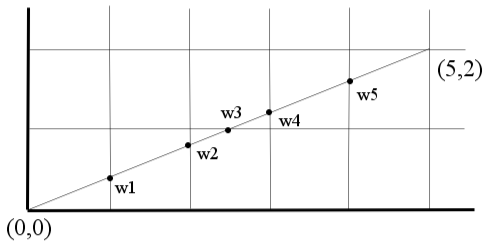}
\caption{The $\mathsf{Cut}\left(\frac{2}{5}\right)$ is equal to $xxyxx=x^2yx^2.$}
\end{figure}

Let $\frac{p}{q}$ be a rational number which is not 0 and $\infty.$ The saddle connections $\{\gamma_i\mid i=1,\ldots, n\}$ with the slope $\frac{p}{q}$ induce an element $\omega$ in $S_n$. It is defined by the following: assume the end points of $\gamma_i$ are the south-west corner of the square $Q_i$ and the north-east corner of some $Q_j$, then $\omega(i)=j.$

\begin{defi}
Let $r$ be any rational number. The code of the saddle connections with the slope $r$ is defined by: $$\mathsf{Code}_X(r):=\left\{
                                                          \begin{array}{ll}
                                                            \omega & r\notin\{0,\infty\} \\
                                                            \sigma^{-1} & r=0\\
                                                            \tau^{-1} & r=\infty.
                                                          \end{array}
                                                        \right.
$$ If there is no confusion about the surface that we are discussing, we denote the code of the saddle connection with the slope $\frac{p}{q}$ by $\mathsf{Code}(\frac{p}{q})$.
\end{defi}
The following proposition gives us a way to get the
$\mathsf{Code}\left(\frac{p}{q}\right)$ via $\mathsf{Cut}\left(\frac{p}{q}\right):$
\begin{prop} Suppose $\mathsf{Cut}\left(\frac{p}{q}\right)=x^{n_1}y^{m_1}x^{n_2}y^{m_2}\ldots{x^{n_k}y^{m_k}}$, $n_i,m_j>{0.}$ Then for any square-tiled surface $X=X(\sigma,\tau)$, $\mathsf{Code}\left(\frac{p}{q}\right)$ is equal to $$\sigma^{n_1}\tau^{m_1}\sigma^{n_2}\tau^{m_2}\ldots{\sigma^{n_k}\tau^{m_k}}.$$
\end{prop}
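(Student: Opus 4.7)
The plan is to track the saddle connection $\gamma_i$ as it passes through the unit squares of $X$, and to show that the letters $x$ and $y$ appearing in $\mathsf{Cut}(p/q)$ correspond exactly to one application of $\sigma$ and of $\tau$, respectively.

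First, I would lift $\gamma_i$ to the straight segment in $\mathbb{R}^2$ from $(0,0)$ to $(q,p)$. Since $p$ and $q$ are coprime and both positive, the interior of this segment avoids every lattice point and crosses the grid lines precisely at the ordered points $w_1,\ldots,w_{p+q-2}$ used to define the cutting sequence. Let $U_0,U_1,\ldots,U_{p+q-2}$ be the successive unit squares traversed, and for each $t$ let $j_t\in\{1,\ldots,n\}$ denote the label of the square on $X$ to which $U_t$ projects. The starting square $U_0=[0,1]\times[0,1]$ is the copy of $Q_i$, so $j_0=i$; the terminal square $U_{p+q-2}=[q-1,q]\times[p-1,p]$ has its north-east corner equal to $(q,p)$, the endpoint of $\gamma_i$, so by the very definition of $\omega=\mathsf{Code}(p/q)$ we have $j_{p+q-2}=\omega(i)$.

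Next, I would examine each transition $U_{t-1}\to U_t$. If $f(w_t)=x$, then the segment exits $U_{t-1}$ through a vertical edge, $U_t$ is the unit square immediately to the east of $U_{t-1}$, and the horizontal gluing rule of $X$ forces $j_t=\sigma(j_{t-1})$. Symmetrically, $f(w_t)=y$ corresponds to leaving through a horizontal edge, which yields $j_t=\tau(j_{t-1})$.

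Finally, I would assemble the transitions according to the given factorization $\mathsf{Cut}(p/q)=x^{n_1}y^{m_1}\cdots x^{n_k}y^{m_k}$: the first $n_1$ crossings apply $\sigma$ a total of $n_1$ times, the next $m_1$ apply $\tau^{m_1}$, and so on. Using the paper's convention that $\sigma\cdot\tau$ denotes ``first $\sigma$, then $\tau$'', the resulting composite is precisely $\sigma^{n_1}\tau^{m_1}\cdots\sigma^{n_k}\tau^{m_k}$, so $\omega(i)=(\sigma^{n_1}\tau^{m_1}\cdots\sigma^{n_k}\tau^{m_k})(i)$ for every $i$, which is the claimed equality of permutations. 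The argument is essentially bookkeeping, so I do not expect a deep obstacle; the most delicate points are verifying that the coprimality of $(p,q)$ prevents the lifted segment from passing through an intermediate lattice point (so that each $w_t$ is of exactly one of the two types $x$ or $y$) and keeping the order of composition consistent with the non-standard multiplication convention on $S_n$ used throughout the paper.
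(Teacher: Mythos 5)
Your proof is correct: the paper states this proposition without proof (offering only Example 3.1.1), and your argument --- developing the saddle connection to the segment from $(0,0)$ to $(q,p)$, using coprimality to rule out interior lattice points, and matching each $x$-crossing (vertical grid line, eastward gluing) with one application of $\sigma$ and each $y$-crossing with one application of $\tau$ under the paper's left-to-right composition convention --- is exactly the intended unwinding of the definitions. The only blemish is inherited from the statement itself, not your argument: a cutting sequence such as $\mathsf{Cut}\left(\frac{2}{5}\right)=x^2yx^2$ is not literally of the form $x^{n_1}y^{m_1}\cdots x^{n_k}y^{m_k}$ with all exponents positive, but your crossing-by-crossing bookkeeping handles an arbitrary word in $x$ and $y$ anyway.
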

\begin{exam} Consider the square-tiled surface $X=X((1,2),(1,3))$. The following figure shows three saddle connections with slope $\frac{2}{5}$ on $X$. It indicates that $\mathsf{Code}\left(\frac{2}{5}\right)=(1,3).$
\begin{figure}[h]
\centering
\includegraphics[width=0.5\textwidth]{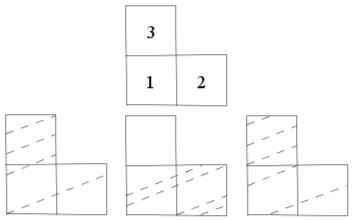}
\caption{Let $X=X((1,2),(1,3)).$ Since $\mathsf{Cut\left(\frac{2}{5}\right)}=x^2yx^2$, the left code $\mathsf{Code}\left(\frac{2}{5}\right)$ equals $(1,2)^2(1,3)(1,2)^2=(1,3).$}
\end{figure}
\end{exam}

\subsection{The left and right codes of  cylinder decompositions}
In this section, we want to introduce the left and right codes. These are permutations and used to record cylinder decompositions on a square-tiled surface with respect to rational slopes.

It is well-known that for any square-tiled surface $X=X(\sigma,\tau)$, the complement of saddle connections $\{\gamma_i\mid i=1,\ldots,n\}$ with a rational slope $r=\frac{p}{q}$ is a disjoint union of open Euclidean cylinders $\{C_i\mid i=1,\ldots,m_r\}$. This is called the cylinder decomposition on $X$ w.r.t. the saddle connections with the slope $r.$ Moreover, all marked points of $X$ are contained in the boundaries of these cylinders. A saddle connection $\gamma_j$ is called a left boundary of some cylinder $C_i$, if the cylinder $C_i$ is at our left hand side when we are walking along  $\gamma_j$ in the direction $(q,p)$.

Pick a cylinder $C_j$ and assume that $\{\gamma_{i_{k}}\mid k=1,\ldots,n_j\}\subset \{\gamma_i\mid i=1,\ldots,n\}$ is the collection of all left boundaries of $C_j.$ Moreover, suppose that if we are walking on $\gamma_{i_k}$ in the direction $(q,p)$, the next left boundary of $C_j$ that we will meet is $\gamma_{i_{k+1}}.$ (For $k=n_j$, the next left boundary is $\gamma_{i_1}$.)  Let the south-west corner of the square $Q_{i_k}$ is an end point of $\gamma_{i_k}$, then this induces an element $\omega_j\in S_n$ defined by $\left(i_1,i_2,\ldots,i_{n_j}\right)$.
\begin{defi}
Let $X=X(\sigma.\tau)$ be a square-tiled surface and $r$ be a rational number. The left code of the saddle connections with the slope $r$ is defined by $$\mathsf{Code}_X^L(r)=\omega_1\omega_2\ldots\omega_{m_r}.$$
\end{defi}
\begin{remk} About the definition 3.2.1, somethings have to be mentioned:
\begin{enumerate}
\item Suppose $\mathsf{Code}_X^L\left(\frac{p}{q}\right)=\omega_1\omega_2\ldots\omega_k$ where $\omega_j's$ are disjoint $n_j$-cycle, then each $\omega_j$ associates a cylinder $C_j$ in the cylinder decomposition w.r.t the saddle connections with the slope $\frac{p}{q}$ and the surface area of $C_j$ is the order of $\omega_j.$
\item The right boundaries of a cylinder  and the right code of a cylinder decomposition also can be defined by the similar ways.
\item If there is not any confusion about the surface we are talking about, we use the notation $\mathsf{Code}^L\left(r\right)$ to denote the left code of the saddle connections with slope $r$.
\end{enumerate}
\end{remk}
\begin{exam}

Let $X=X((1,2),(1,3))$ and the slope of the saddle connection be $\frac{2}{5}$. Then the left code is $\mathsf{Code}^L(\frac{2}{5})=(1,2).$
    \begin{figure}[h]
    \centering
    \includegraphics[width=0.2\textwidth]{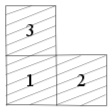}
    \caption{Let $X=X((1,2),(1,3)).$ For the saddle connections with slope $\frac{2}{5},$ its left code is $\mathsf{Code}^L\left(\frac{2}{5}\right)=(1,2)$ and the cylinder decomposition contains two cylinders with area 1 and 2.}
    \end{figure}
\end{exam}
There is an obvious relation between the code and left code:
\begin{prop}For a surface $X=X(\sigma,\tau)$, we have $$\mathsf{Code}^L\left(\frac{p}{q}\right)=\mathsf{Code}\left(\frac{p}{q}\right)\tau\sigma.$$
\end{prop}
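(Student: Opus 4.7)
The plan is to prove the equality pointwise in $S_n$: for an arbitrary index $i$, I will check that $\mathsf{Code}^L(p/q)(i)$ agrees with $(\mathsf{Code}(p/q)\cdot\tau\cdot\sigma)(i)$. Set $j:=\mathsf{Code}(p/q)(i)$, so by definition of $\mathsf{Code}$ the saddle connection $\gamma_i$ runs from the southwest corner of $Q_i$ to the northeast corner of $Q_j$. Unwinding the paper's left-to-right multiplication convention, the right-hand side sends $i$ to $\sigma(\tau(j))$ in ordinary function-composition notation, so the task reduces to the following: starting from $\gamma_i$, show that the next left-boundary saddle connection of the cylinder containing $\gamma_i$ starts at the southwest corner of the square labeled $\sigma(\tau(j))$.

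The key geometric input is the local square-gluing around the endpoint of $\gamma_i$. Unwrapping a small neighborhood of the northeast corner of $Q_j$, the four quadrants around that corner are filled by $Q_j$ in the southwest, $Q_{\sigma(j)}$ in the southeast, $Q_{\tau(j)}$ in the northwest, and, by following either of the two remaining edge gluings, $Q_{\sigma(\tau(j))}=Q_{\tau(\sigma(j))}$ in the northeast. Only the gluings at a single corner are used, so this description of the four quadrants is valid whether the endpoint is a regular point or a cone point.

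Next I locate the next left boundary $\gamma_{i'}$. Walking along $\gamma_i$ in direction $(q,p)$ places the cylinder $C$ on the left, i.e., on the $90^\circ$ counterclockwise side of the walking direction; transported to the endpoint, $C$ sits on the clockwise side of the back-pointing ray of $\gamma_i$. The local wedge of $C$ at the endpoint subtends exactly angle $\pi$, bounded by the back-pointing ray of $\gamma_i$ on one side and the outgoing ray of $\gamma_{i'}$ on the other. Since the back-pointing ray lies in the southwest quadrant (inside $Q_j$), rotating it clockwise by $\pi$ brings it into the northeast quadrant, i.e., into the interior of $Q_{\sigma(\tau(j))}$. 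Hence $\gamma_{i'}$ emanates from the southwest corner of $Q_{\sigma(\tau(j))}$, giving $i'=\sigma(\tau(j))$, which is exactly what the right-hand side of the proposed identity prescribes.

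The main piece of care will be getting the sense of rotation right: swapping clockwise with counterclockwise replaces $\sigma(\tau(j))$ by $\tau(\sigma(j))$, and these two indices differ at any cone point. Translating ``cylinder on the walker's left'' into ``wedge clockwise from the back-pointing ray'' relies on the standard convention that ``left'' is $90^\circ$ counterclockwise from the walking direction, and this one bookkeeping step is where I expect essentially all the conceptual work to lie; once it is settled, the rest is a direct reading off of the four-square picture at a single corner.
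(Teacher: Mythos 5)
Your argument is correct and reaches the right conclusion; note that the paper itself offers no proof of this proposition (it is introduced as ``an obvious relation''), so yours is a genuine verification rather than a variant of an argument in the text. The operative chain is sound: with the paper's left-to-right convention $(\mathsf{Code}(p/q)\,\tau\sigma)(i)=\sigma(\tau(j))$, the cylinder's wedge at the terminal corner is exactly $\pi$ and lies clockwise of the back-pointing ray, and rotating that ray clockwise by $\pi$ passes from $Q_j$ through $Q_{\tau(j)}$ (northwest) into the sector glued to the right of $Q_{\tau(j)}$, namely $Q_{\sigma(\tau(j))}$ --- one can sanity-check this against the paper's Example 3.2.1, where $\mathsf{Code}(2/5)\tau\sigma=(1,3)(1,3)(1,2)=(1,2)=\mathsf{Code}^L(2/5)$. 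The one blemish is your middle paragraph: the assertion that ``$Q_{\sigma(\tau(j))}=Q_{\tau(\sigma(j))}$'' and that the four-quadrant picture ``is valid whether the endpoint is a regular point or a cone point'' is false at a cone point (where the link of the corner has more than four sectors and the two indices differ, as your own closing paragraph correctly observes). This does not damage the proof, because the derivation only uses the two gluings along the clockwise chain $Q_j\to Q_{\tau(j)}\to Q_{\sigma(\tau(j))}$ together with the fact that the wedge is exactly $\pi$ (so you never rotate past that sector); you should simply delete the claimed equality and phrase the northeast sector as ``the sector clockwise-adjacent to $Q_{\tau(j)}$.'' You may also want to remark that the degenerate slopes $0$ and $\infty$ are covered by the paper's explicit definitions of $\mathsf{Code}$ there rather than by the quadrant picture.
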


\begin{remk} For the right code, we also have the following relation:
$$\mathsf{Code}^R\left(\frac{p}{q}\right)=\mathsf{Code}\left(\frac{p}{q}\right)\sigma\tau.$$ This also gives a relation between the left and right codes:
$$\mathsf{Code}^R\left(\frac{p}{q}\right)=\mathsf{Code}^L\left(\frac{p}{q}\right)\sigma^{-1}\tau^{-1}\sigma\tau=\mathsf{Code}^L\left(\frac{p}{q}\right)\Theta.$$
\end{remk}

\section{Algebraic properties of $\mathsf{Cut}$ and $\mathsf{Code^L}$}

\subsection{The additivity of the cutting sequences}
\begin{defi}
For any two rational numbers $\frac{p_1}{q_1}$ and $\frac{p_2}{q_2}$, the notation $$\frac{p_1}{q_1}<_n\frac{p_2}{q_2}$$ means $\frac{p_1}{q_1}<\frac{p_2}{q_2}$ and $p_2q_1-p_1q_2=1$. If  $\frac{p_1}{q_1}<_n\frac{p_2}{q_2}$, the Farey addition of these two rational numbers is defined by $$\frac{p_1}{q_1}\oplus\frac{p_2}{q_2}:=\frac{p_1+p_2}{q_1+q_2}.$$
In this case,  $\frac{p_1}{q_1}$ and $\frac{p_2}{q_2}$ are called the neighbors of $\frac{p_1}{q_1}\oplus\frac{p_2}{q_2}.$
\end{defi}
The main result in this section is the following theorem:
\begin{thm}
Let $\frac{p_1}{q_1}<_n\frac{p_2}{q_2}$,  then the cutting sequence of the Farey addition $\frac{p_1}{q_1}\oplus\frac{p_2}{q_2}$ is equal to    $$\mathsf{Cut}\left(\frac{p_1}{q_1}\oplus\frac{p_2}{q_2}\right)=\mathsf{Cut}\left(\frac{p_1}{q_1}\right)yx\mathsf{Cut}\left(\frac{p_2}{q_2}\right).$$
\end{thm}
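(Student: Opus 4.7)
The plan is to use the Farey hypothesis $p_2q_1 - p_1q_2 = 1$ to pinpoint the location of the lattice point $(q_1,p_1)$ relative to the segment $L := \left(\frac{p_1+p_2}{q_1+q_2}\right)$ from $(0,0)$ to $(q_1+q_2,p_1+p_2)$. A direct computation gives $L(q_1) = p_1 + 1/(q_1+q_2)$, so $L$ passes just above $(q_1,p_1)$, while $L$ meets $y = p_1$ at $x = q_1 - 1/(p_1+p_2)$. In particular, the crossing of $y = p_1$ immediately precedes the crossing of $x = q_1$ with no integer line between them, which is exactly the pair of letters $yx$ in the middle of $\mathsf{Cut}(L)$.

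It then remains to verify that the letters of $\mathsf{Cut}(L)$ recorded before this middle $y$ agree with $\mathsf{Cut}\left(\frac{p_1}{q_1}\right)$, and those after the middle $x$ agree with $\mathsf{Cut}\left(\frac{p_2}{q_2}\right)$. For the left piece, I would compare $L$ with $L_1 := \left(\frac{p_1}{q_1}\right)$ on $[0,q_1]$. A short computation gives $L(x) - L_1(x) = x/[q_1(q_1+q_2)]$, which is strictly positive and bounded above by $1/(q_1+q_2) < 1$ for $0 < x \le q_1$. Since $\gcd(p_1,q_1)=1$, the fractional part $\{jp_1/q_1\}$ for $0 < j < q_1$ never exceeds $(q_1-1)/q_1$, and the elementary inequality $(q_1-1)/q_1 + 1/(q_1+q_2) < 1$ then forces $\lfloor L(j)\rfloor = \lfloor L_1(j)\rfloor$. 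Hence up to any $x = j$ the same number of horizontal crossings have occurred on $L$ as on $L_1$. A further check using $p_2q_1 - p_1q_2 = 1$ shows that for $0 < j \le q_1-1$ and $0 < k \le p_1-1$ one has $jp_1 < kq_1 \iff j(p_1+p_2) < k(q_1+q_2)$, so the relative order of vertical and horizontal crossings is also preserved. Translating the right piece of $L$ by $(-q_1,-p_1)$ reduces the right half to the analogous comparison with $L_2 := \left(\frac{p_2}{q_2}\right)$, where the corresponding vertical gap is $(q_2-x)/[q_2(q_1+q_2)]$; the same estimates then finish that side.

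The principal delicate step will be the ordering comparison. Both the count of horizontal crossings and the interleaving of vertical with horizontal crossings must be preserved under the perturbation from the broken path $L_1 \cup (\text{translate of }L_2)$ to the straight segment $L$; each is controlled by a different arithmetic estimate, and the Farey identity $p_2q_1 - p_1q_2 = 1$ is essential in both. Without it, the perturbation term is too large for the floor-function comparison to go through, and neither half of $\mathsf{Cut}(L)$ would reproduce the cutting sequence of its neighbor.
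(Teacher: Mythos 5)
Your proof is correct and follows essentially the same route as the paper: both decompose the mediant segment at the lattice point $(q_1,p_1)$, observe that the middle unit square contributes the letters $yx$, and compare the two remaining halves with $\left(\frac{p_1}{q_1}\right)$ and a translate of $\left(\frac{p_2}{q_2}\right)$ --- where the paper argues geometrically that each half lies in a thin parallelogram $P^L(q,p)$ with no lattice point in its interior, you supply the equivalent explicit floor-function and crossing-order estimates, with the Farey identity playing the same role in both versions. The one omission is the degenerate cases $\frac{p_1}{q_1}=\frac{0}{1}$ and $\frac{p_2}{q_2}=\frac{1}{0}$, where the cutting sequences are the formal symbols $x^{-1}$, $y^{-1}$ and your crossing analysis does not apply (the ``middle'' crossing of $y=p_1$ degenerates to the endpoint of the segment); the paper dispatches these separately in one line, and your argument needs the same preliminary step.
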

\begin{proof}
Suppose that $\frac{p_1}{q_1}=\frac{0}{1},$ then $\frac{p_2}{q_2}$ should be $\frac{1}{n}$ for some $n\in\mathbb{N}.$ Since the cutting sequences of $\frac{0}{1}$ and $\frac{1}{n}$ are $y^{-1}$ and $x^{n-1}$ respectively, we have $\mathsf{Cut}\left(\frac{1}{n+1}\right)=\mathsf{Cut}\left(\frac{0}{1}\right)yx\mathsf{Cut}\left(\frac{1}{n}\right).$ Similarly, if $\frac{p_2}{q_2}=\frac{1}{0}$, we also have the result that $\mathsf{Cut}\left(\frac{n+1}{1}\right)=\mathsf{Cut}\left(\frac{n}{1}\right)yx\mathsf{Cut}\left(\frac{0}{1}\right).$

For $\frac{p_1}{q_1}$ and $\frac{p_2}{q_2}\notin \{\frac{0}{1},\frac{1}{0}\},$  the proof is based on the fact that the line segment $\left(\frac{p_1+p_2}{q_1+q_2}\right)$ is contained in the set $A=P^L(q_1,p_1)\cup  S(q_1,p_1)\cup \left(P^L(q_2,p_2)+(q_1,p_1)\right)$ where $S(q,p)$ is the unit square whose south-east corner is $(q,p)$ and $P^L(q,p)$ is the set bounded by $y=\frac{p}{q}x$, $y=\frac{p}{q}x+\frac{1}{q}$, $x=0$ and $y=p.$

Since there is not any integer pair contained in the interior of $A$, two line segments $L_1:=\left(\frac{p_1+p_2}{q_1+q_2}\right)\cap P^L(q_1,p_1)$ and $L_2:=\left(\frac{p_1}{q_1}\right)$ enjoy the same cutting sequence. This is also true for the line segments $R_1:=\left(\frac{p_1+p_2}{q_1+q_2}\right)\cap \left(P^L(q_2,p_2)+(q_1,p_1)\right)$ and $R_2:=\left(\frac{p_2}{q_2}\right)+(q_1,p_1).$ By the trivial fact that the cutting sequence of the line segment $\left(\frac{p_1+p_2}{q_1+q_2}\right)\cap S(q_1,p_1)$ is $yx,$ we prove the this theorem.
\end{proof}
\begin{remk} $\mbox{ }$
\begin{enumerate}
\item Suppose $\frac{p_1}{q_1}<_n\frac{p_2}{q_2}$, then $\frac{p_1}{q_1}<_n\left(\frac{p_1}{q_1}\oplus\frac{p_2}{q_2}\right)<_n\frac{p_2}{q_2}$.
\item By a similar argument, we also have   $$\mathsf{Cut}\left(\frac{p_1}{q_1}\oplus\frac{p_2}{q_2}\right)=\mathsf{Cut}\left(\frac{p_2}{q_2}\right)xy\mathsf{Cut}\left(\frac{p_1}{q_1}\right)$$
\end{enumerate}
\end{remk}

\subsection{Left codes, right codes and simple closed curves}
Let $X=X(\sigma, \tau)$ be a given square-tiled surface. Apply proposition 3.1.1 and Theorem 4.1.1, we have some algebraic properties about left codes and right codes. Moreover, these properties help us reduce the complexity of the computation of left and right codes. To make a summary, we have the following proposition:
\begin{prop} Let $\frac{P_1}{q_1}<_n\frac{p_2}{q_2}$ be any two rational numbers, then
\begin{enumerate}
\item $\mathsf{Code}^L\left(\frac{p_1}{q_1}\oplus\frac{p_2}{q_2}\right)= \mathsf{Code}^L\left(\frac{p_1}{q_1}\right)\mathsf{Code}^L\left(\frac{p_2}{q_2}\right).$
\item $\mathsf{Code}^R\left(\frac{p_1}{q_1}\oplus\frac{p_2}{q_2}\right)=\mathsf{Code}^R\left(\frac{p_2}{q_2}\right)\mathsf{Code}^R\left(\frac{p_1}{q_1}\right).$
\item $\mathsf{Code}^R\left(\frac{p_2}{q_2}\right)=\mathsf{Code}^L\left(\frac{p_1}{q_1}\right)\mathsf{Code}^L\left(\frac{p_2}{q_2}\right)\mathsf{Code}^L\left(\frac{p_1}{q_1}\right)^{-1}.$
\end{enumerate}
\end{prop}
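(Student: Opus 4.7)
The strategy is to deduce all three identities from Theorem~4.1.1 (together with its dual in Remark~4.1.1(2)) by feeding its output through the translation dictionary of $\S 3$: Proposition~3.1.1 converts a cutting sequence into a code, Proposition~3.2.1 converts a code into a left code by right multiplication by $\tau\sigma$, and Remark~3.2.2 supplies the analogous right-code relation $\mathsf{Code}^R(r)=\mathsf{Code}(r)\,\sigma\tau$ together with the bridge $\mathsf{Code}^R(r)=\mathsf{Code}^L(r)\,\Theta$.

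For (1), the key observation is that the factorization $\mathsf{Cut}(r_1\oplus r_2)=\mathsf{Cut}(r_1)\,yx\,\mathsf{Cut}(r_2)$ from Theorem~4.1.1 is still a reduced alternating word of the form $x^{n_1}y^{m_1}\cdots x^{n_k}y^{m_k}$: the inserted $y$ merges with the trailing $y^{b_k}$ of $\mathsf{Cut}(r_1)$, and the inserted $x$ merges with the leading $x^{c_1}$ of $\mathsf{Cut}(r_2)$. Applying Proposition~3.1.1 to this merged word and factoring out the middle piece yields
\[
\mathsf{Code}\!\left(\tfrac{p_1}{q_1}\oplus\tfrac{p_2}{q_2}\right)\;=\;\mathsf{Code}\!\left(\tfrac{p_1}{q_1}\right)\,\tau\sigma\,\mathsf{Code}\!\left(\tfrac{p_2}{q_2}\right).
\]
Right-multiplying both sides by $\tau\sigma$ and grouping $\mathsf{Code}(r_i)\cdot\tau\sigma=\mathsf{Code}^L(r_i)$ via Proposition~3.2.1 gives (1). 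Part (2) is formally identical once one starts from the alternative decomposition $\mathsf{Cut}(r_1\oplus r_2)=\mathsf{Cut}(r_2)\,xy\,\mathsf{Cut}(r_1)$ of Remark~4.1.1(2) and finishes with the right-code conversion from Remark~3.2.2, yielding an intermediate identity $\mathsf{Code}(r_1\oplus r_2)=\mathsf{Code}(r_2)\,\sigma\tau\,\mathsf{Code}(r_1)$.

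For (3) no new geometry is required: it is a purely algebraic consequence of (1), (2), and the identity $\mathsf{Code}^R=\mathsf{Code}^L\,\Theta$. I would compute $\mathsf{Code}^R(r_1\oplus r_2)$ in two ways. Substituting $\mathsf{Code}^R=\mathsf{Code}^L\,\Theta$ into (1) gives $\mathsf{Code}^R(r_1\oplus r_2)=\mathsf{Code}^L(r_1)\,\mathsf{Code}^L(r_2)\,\Theta$, while doing the same substitution inside (2) gives $\mathsf{Code}^R(r_1\oplus r_2)=\mathsf{Code}^R(r_2)\,\mathsf{Code}^L(r_1)\,\Theta$. Equating the two, cancelling $\Theta$ on the right, and then right-multiplying by $\mathsf{Code}^L(r_1)^{-1}$ isolates $\mathsf{Code}^R(r_2)$ as the conjugate of $\mathsf{Code}^L(r_2)$ by $\mathsf{Code}^L(r_1)$, which is exactly (3).

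The only nontrivial obstacle I anticipate is the boundary rationals $r_1=\tfrac{0}{1}$ and $r_2=\tfrac{1}{0}$, where $\mathsf{Cut}$ is defined by the formal placeholders $y^{-1}$ and $x^{-1}$ rather than by a genuine alternating word, so Propositions~3.1.1 and~3.2.1 do not apply verbatim. In those cases one must verify the identities directly from the geometric description of $\mathsf{Code}^L$ on the horizontal and vertical cylinder decompositions, in the same spirit as the separate treatment the author gives to these cases in the first paragraph of the proof of Theorem~4.1.1.
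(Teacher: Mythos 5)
Your proposal is correct and takes essentially the same route as the paper: the paper states this proposition without a written proof, saying only that it follows from Proposition~3.1.1 and Theorem~4.1.1, and your argument supplies the details of exactly that derivation (passing from $\mathsf{Cut}(r_1\oplus r_2)=\mathsf{Cut}(r_1)\,yx\,\mathsf{Cut}(r_2)$ and its dual to $\mathsf{Code}$, then to $\mathsf{Code}^L$ and $\mathsf{Code}^R$ via $\tau\sigma$, $\sigma\tau$ and $\Theta$, with (3) obtained algebraically from (1) and (2)). Your flagging of the boundary slopes $0$ and $\infty$ as needing a separate direct check is a reasonable extra precaution consistent with how the paper handles those cases in Theorem~4.1.1.
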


In the rest of this section, we use continued fraction expansions to express rational numbers. Moreover, since the reflection with respect to the line $x=y$ will send any straight line with the slope $r>1$ to the straight line with the slope $\frac{1}{r}$, we only consider the rational number $r\in [0,1].$  We also ask for that for any rational number $r=[a_1,a_2,\ldots,a_k]\in [0,1)$, the positive integer $a_k$ is larger than 1. (If $a_k=1$, the continued fraction expansion is presented by $[a_1,a_2,\ldots,a_{k-1}+1]$.) A fundamental result is that for a rational number $r=[a_1,a_2,\ldots,a_k]$, its neighbors are $r'=[a_1,a_2,\ldots,a_k-1]$ and $r''=[a_1,a_2,\ldots,a_{k-1}]$. If $k$ is even, $r'<_nr<_nr''$ and therefore $r=r'\oplus r'';$ if $k$ is odd, $r$ is equal to $r''\oplus r'.$

Suppose the left code of a rational number $r$ contains a 1-cycle, the cylinder decomposition of the saddle connections with the slope $r$ must has a cylinder with surface area 1 and its left boundary consists of a saddle connection and contains only one marked point. Thus, we have the following definition:

\begin{defi} A rational number $[a_1,a_2, \ldots,a_n]$ is called a simple closed curve (abbrev. scc) if its left code contains a 1-cycle. Moreover, we say $[a_1,a_2, \ldots,a_n]$ is a scc at $k$ if $\mathsf{code}^L[a_1,a_2, \ldots,a_n](k)$ equals $k.$
\end{defi}

Applying the following property and corollaries, we not only construct a scc from some left codes, but also describe the distribution of some scc's at $m$.
\begin{prop} For a fixed square-tiled surface $X=X(\sigma,\tau)$ where $\sigma$ and $\tau$ are in $S_n.$
\begin{enumerate}
\item If $\mathsf{Code^L}([a_1,a_2,\ldots,a_m])$ is an n-cycle $(x_1,x_2,\ldots,x_n)\in{S_n}$, for any $k\in\{1,2,\ldots,n\}$ there exists a $t_k>0$ such that $[a_1,a_2,\ldots,a_m,t_k]$ is a scc at k.
\item If one of $[a_1,a_2,\ldots,a_k]$ and $[a_1,a_2,\ldots,a_{k+1}-1]$ is a scc at m, there exists a $t>0$ such that $[a_1,a_2,\ldots,a_{k+1},t]$ or $[a_1,a_2,\ldots,a_{k+1}-1,1,t]$ is also a scc at $m$.
\end{enumerate}
\end{prop}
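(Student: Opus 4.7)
The plan for both parts is to express the extended continued fraction $[a_1,\ldots,a_m,t]$ as the result of $t$ successive Farey additions and then to convert that Farey iteration into a product of left codes via Proposition 4.2.1(1).

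\textbf{Part (1).} I would set $r=[a_1,\ldots,a_m]$ with $\omega:=\mathsf{Code}^L(r)$ the given $n$-cycle, take $r''=[a_1,\ldots,a_{m-1}]$ to be the other Farey neighbor, and write $\eta:=\mathsf{Code}^L(r'')$. A short unwinding of the continued fraction shows that $p_t:=[a_1,\ldots,a_m,t]$ satisfies $p_1=r\oplus r''$ and $p_t=p_{t-1}\oplus r$ for $t\ge 2$, where the Farey order (which of the two summands is $<_n$ the other) is determined by the parity of $m$. Iterating Proposition 4.2.1(1) then yields $\mathsf{Code}^L(p_t)=\omega^{t}\eta$ or $\eta\omega^{t}$. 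Asking that $p_t$ be a scc at $k$ becomes $\omega^{t}(\eta(k))=k$ (or the symmetric version); since $\omega$ is a full $n$-cycle acting transitively on $\{1,\ldots,n\}$, this has a unique solution $t\in\{0,\ldots,n-1\}$, and if that $t$ happens to be $0$ I would use $t+n$ instead in order to guarantee $t_k>0$.

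\textbf{Part (2).} I interpret the hypothesis as saying that one of the two Farey neighbors $\alpha:=[a_1,\ldots,a_k]$ and $\beta:=[a_1,\ldots,a_{k+1}-1]$ of $\gamma:=[a_1,\ldots,a_{k+1}]$ has left code fixing $m$. I would set $\omega:=\mathsf{Code}^L(\gamma)$ and let $T$ be the length of the cycle of $\omega$ containing $m$. If $\alpha$ is the neighbor that is a scc at $m$, then the same Farey-iteration argument, now with $\gamma$ playing the role of $r$ and $\alpha$ playing the role of $r''$, gives $\mathsf{Code}^L([a_1,\ldots,a_{k+1},t])=\omega^{t}\,\mathsf{Code}^L(\alpha)$ (or reversed); since $\mathsf{Code}^L(\alpha)$ fixes $m$, the choice $t=T$ forces $\omega^{T}$ to fix $m$ as well, and the whole product then fixes $m$. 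If instead $\beta$ is the scc neighbor, then $[a_1,\ldots,a_{k+1}-1,1,t]$ is the Farey iterate of $\gamma$ on the $\beta$-side, producing the analogous identity with $\mathsf{Code}^L(\beta)$ in place of $\mathsf{Code}^L(\alpha)$, and again $t=T$ works.

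\textbf{Main obstacle.} The algebra is quick once $[a_1,\ldots,a_m,t]$ is identified with the correct Farey iterate, so the step that has to be verified with care is the bookkeeping lemma: that $[a_1,\ldots,a_m,t]$ really is obtained by $t$ Farey additions of $r$ starting from $r''$, and that the $<_n$ order---which dictates the left/right placement of $\omega^{t}$ and $\eta$ in Proposition 4.2.1(1)---is controlled by the parity of $m$. Once that parity case split is in place, the cycle structure of $\omega$ (a full $n$-cycle for Part (1), merely containing the cycle of $m$ for Part (2)) produces the required $t$ immediately.
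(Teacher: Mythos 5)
Your proposal is correct and follows essentially the same route as the paper: both reduce $[a_1,\ldots,a_m,t]$ to a product $\mathsf{Code}^L([a_1,\ldots,a_{m-1}])\cdot\mathsf{Code}^L([a_1,\ldots,a_m])^{t}$ (in the parity-dependent order) via the Farey-addition rule of Proposition 4.2.1, then use transitivity of the $n$-cycle in part (1) and a power of $\mathsf{Code}^L([a_1,\ldots,a_{k+1}])$ fixing $m$ in part (2). The only (harmless) deviation is that in part (2) you take $t$ to be the length of the cycle of $\omega$ through $m$, whereas the paper takes the full order of $\omega$; both choices make the relevant power fix $m$.
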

\begin{proof} (1). Assume $m=2d+1$ and $\mathsf{Code^L}([a_1,a_2,\ldots a_{2d}])(k)=l$. Because $\mathsf{Code^L}([a_1,a_2,\ldots,a_{2d+1}])=(x_1,x_2,\ldots,x_n)\in{S_n}$, there exists a $t_k>0$ such that $\mathsf{Code^L}([a_1,a_2,\ldots,a_{2d+1}])^{t_k}(l)=k$. Due to the fact that $$\mathsf{Code^L}([a_1,a_2,\ldots,a_{2d+1},t_k])=\mathsf{Code^L}([a_1,a_2,\ldots a_{2d}])\mathsf{Code^L}([a_1,a_2,\ldots,a_{2d+1}])^{t_k},$$ we have $\mathsf{Code^L}([a_1,a_2,\ldots,a_{2d+1},t_k])(k)=k.$
Therefore, $[a_1,a_2,\ldots,a_{2d+1},t_k]$ is a scc at $k.$ For the case that $m=2d$, the proof is similar.\\

(2). Suppose $k+1$ is odd. Thus $[a_1,a_2,\ldots,a_k]<_n[a_1,a_2,\ldots,a_{k+1}]<_n[a_1,a_2,\ldots,a_{k+1}-1].$ For any positive integer $t,$ we have
\begin{align*}
&\mathsf{Code^L}([a_1,a_2,\ldots,a_{k+1},t])=\\&\mathsf{Code^L}([a_1,a_2,\ldots,a_k])\mathsf{Code^L}([a_1,a_2,\ldots,a_{k+1}])^t
\end{align*}
and
\begin{align*}
&\mathsf{Code^L}([a_1,a_2,\ldots,a_{k+1}-1,1,t])=\\&\mathsf{Code^L}([a_1,a_2,\ldots,a_{k+1}])^t\mathsf{Code^L}([a_1,a_2,\ldots,a_{k+1}-1]).
\end{align*}
 Since$[a_1,a_2,\ldots,a_k]$ is a scc at $m$, let $t$ be the order of $\mathsf{Code^L}([a_1,a_2,\ldots,a_{k+1}])$ and then we have a ssc $[a_1,a_2,\ldots,a_{k+1},t]$ at $m.$ Similarly, if $[a_1,a_2,\ldots,a_{k+1}-1]$ is a ssc and $t$ is the order of $\mathsf{Code^L}([a_1,a_2,\ldots,a_{k+1}]$, $[a_1,a_2,\ldots,a_{k+1}-1,1,t])$ is a ssc. For the case that $k+1$ is even, using the above approach, we get the desired result.
\end{proof}

Using similar tricks in the proof of proposition 4.2.2, the following corollaries are obvious:
\begin{cor}
For the left code $[a_1,a_2,\ldots,a_k]$, let $s$ be the order of $[a_1,a_2,\ldots,a_{k-1}].$ Then for any positive integer $m$, $$[a_1,a_2,\ldots,a_k+sm]=[a_1,a_2,\ldots,a_k].$$
\end{cor}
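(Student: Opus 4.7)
The plan is to derive the single algebraic identity
$$\mathsf{Code}^L([a_1,\ldots,a_k+j]) \;=\; \mathsf{Code}^L([a_1,\ldots,a_k])\cdot\mathsf{Code}^L([a_1,\ldots,a_{k-1}])^{\,j}$$
(valid for every integer $j\ge 0$ when $k$ is even, with the two factors swapped in order when $k$ is odd), and then simply to plug in $j=sm$. Since $s$ is the order of $\mathsf{Code}^L([a_1,\ldots,a_{k-1}])$, the rightmost factor collapses to the identity and the corollary drops out immediately.

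To prove the displayed identity I would argue by induction on $j$. The base case $j=0$ is tautological. For the inductive step, fix $j\ge 1$ and consider the rational $[a_1,\ldots,a_k+j]$. Its Stern--Brocot predecessors, as recalled in the paragraph preceding Proposition 4.2.2, are $[a_1,\ldots,a_k+j-1]$ and $[a_1,\ldots,a_{k-1}]$; when $k$ is even these two satisfy
$$[a_1,\ldots,a_k+j-1]\;<_n\;[a_1,\ldots,a_k+j]\;<_n\;[a_1,\ldots,a_{k-1}],$$
so by the definition of Farey addition,
$$[a_1,\ldots,a_k+j]\;=\;[a_1,\ldots,a_k+j-1]\;\oplus\;[a_1,\ldots,a_{k-1}].$$
Proposition 4.2.1(1) then upgrades this to the multiplicative identity
$$\mathsf{Code}^L([a_1,\ldots,a_k+j])\;=\;\mathsf{Code}^L([a_1,\ldots,a_k+j-1])\cdot\mathsf{Code}^L([a_1,\ldots,a_{k-1}]),$$
and the inductive hypothesis finishes the step. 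For $k$ odd the inequalities flip, Proposition 4.2.1(1) places $\mathsf{Code}^L([a_1,\ldots,a_{k-1}])$ on the left of each new factor, and one gets the same formula with the order of the two sides reversed; this does not affect the conclusion because the $s$th power of the factor being iterated is still the identity.

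The substantive content is therefore not in the induction itself but in the one-line claim about Farey neighbors, i.e.\ that $[a_1,\ldots,a_k+j-1]$ and $[a_1,\ldots,a_{k-1}]$ are in fact the neighbors of $[a_1,\ldots,a_k+j]$ and are arranged in the order dictated by the parity of $k$. This is where I expect the real bookkeeping — as opposed to real difficulty — to lie: one has to unwind the recursive characterization $[a_1,\ldots,a_k]=[a_1,\ldots,a_k-1]\oplus[a_1,\ldots,a_{k-1}]$ (or its reverse) from the paragraph before Proposition 4.2.2, combine it with Remark 4.1.1(1) which guarantees that a Farey sum lies strictly between its summands, and keep the parity straight as $j$ varies. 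Minor edge cases — $k=1$, or $a_k-1=0$ forcing the convention $[a_1,\ldots,a_{k-1},0]=[a_1,\ldots,a_{k-1}]$ — are absorbed by the boundary definitions of $\mathsf{Code}^L$ at slopes $0$ and $\infty$ given in Section 3, and should be dispatched by a short remark rather than a separate argument.
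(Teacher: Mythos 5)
Your proof is correct and is essentially the paper's intended argument: the paper gives no written proof, saying only that the corollary follows from the ``same tricks'' as Proposition 4.2.2, and that trick is exactly the parity-ordered product identity $\mathsf{Code}^L([a_1,\ldots,a_{k-1},t])=\mathsf{Code}^L([a_1,\ldots,a_{k-2}])\,\mathsf{Code}^L([a_1,\ldots,a_{k-1}])^{t}$, which is the same identity you obtain by inducting on the last entry via the Farey-neighbor recursion and Proposition 4.2.1(1), and then collapse by reducing the exponent modulo $s$. No gaps.
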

\begin{cor}
Let $\Omega_s:=\{i:[a_1,a_2,\ldots,a_k,i]\mbox{ is a scc at }s.\}.$ If $\Omega_s\neq\phi,$ there exist two numbers $u,v\in\{2,3,\ldots,Area(X)\}$ such that $$\Omega_s=\{u+vt:t\geq 0\}.$$
\end{cor}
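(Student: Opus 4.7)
The plan is to exploit the multiplicative structure of the left code under continued-fraction extension, reducing the scc condition at $s$ to a single equation of the form $B^i(p)=q$ for a fixed permutation $B\in S_n$ and fixed points $p,q$. This is the same technique already used in the proof of Proposition 4.2.2.

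First, set $A:=\mathsf{Code}^L([a_1,\ldots,a_{k-1}])$ and $B:=\mathsf{Code}^L([a_1,\ldots,a_k])$. By Proposition 4.2.1(1), together with the parity case analysis carried out in the proof of Proposition 4.2.2, the extended left code factors as
\[
\mathsf{Code}^L([a_1,\ldots,a_k,i])\;=\;A\cdot B^i\quad\text{or}\quad B^i\cdot A,
\]
depending on the parity of $k$. The condition that $[a_1,\ldots,a_k,i]$ be a scc at $s$ then becomes the single equation $B^i(p)=q$ for explicit $p,q\in\{1,\ldots,n\}$ determined by $A$ and $s$ (for instance $p=s$ and $q=A^{-1}(s)$ in the first case).

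Second, I would invoke the elementary fact that for any $B\in S_n$ and fixed $p,q$, the solution set $\{i\geq 1:B^i(p)=q\}$ is either empty or an arithmetic progression $\{i_0+vt:t\geq 0\}$, where $v$ is the length of the $B$-cycle through $p$ and $i_0$ is the smallest positive solution. Assuming $\Omega_s\neq\emptyset$ and imposing the continued-fraction convention that the last entry be at least $2$, we obtain the claimed form $\Omega_s=\{u+vt:t\geq 0\}$ with $u\geq 2$; the cycle length satisfies $v\leq n=\mathrm{Area}(X)$, and one may take $u\leq i_0+v\leq\mathrm{Area}(X)$.

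The step I expect to be the main obstacle is establishing the lower bound $v\geq 2$, i.e., ruling out the degenerate case where $B$ already fixes $p$, for otherwise $B^i(p)=q$ either has no solutions or has every $i$ as a solution and the progression is forced to have step $1$. Geometrically, $B(p)=p$ would mean that $[a_1,\ldots,a_k]$ is itself already a scc at $p$, corresponding via Remark 3.2.1(1) to a cylinder of area $1$ in the cylinder decomposition in direction $[a_1,\ldots,a_k]$. I would handle this by arguing that in that situation the combinatorics of $A$ and $B$ force $A(s)\ne p$, so that $\Omega_s=\emptyset$ and the corollary holds vacuously; the geometric input needed is that extending the continued fraction further cannot create a new scc at the same marked point once the underlying cylinder has already degenerated to a single square.
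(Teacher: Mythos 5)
Your approach is the one the paper intends: the text offers no argument for this corollary beyond ``using similar tricks in the proof of Proposition 4.2.2,'' and your reduction of the scc condition to a single equation $B^i(p)=q$ via the factorization $\mathsf{Code}^L([a_1,\ldots,a_k,i])=A\cdot B^i$ (or $B^i\cdot A$), followed by the observation that the solution set is an arithmetic progression whose common difference is the length of the $B$-cycle through $p$, is exactly that trick, carried out in more detail than the paper itself provides.

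Two points in your sketch are left hanging. First, the degenerate case $B(p)=p$ does not require the vague geometric claim you end with: if $B(p)=p$ and $\Omega_s\neq\emptyset$, then $p=q$ and both $[a_1,\ldots,a_{k-1}]$ and $[a_1,\ldots,a_k]$ are scc's at $s$, so Proposition 4.3.1 (applied to these two consecutive convergents) forces $\sigma(s)=s$ and $\tau(s)=s$; for a connected surface with $\mathrm{Area}(X)\geq 2$ this is impossible, so in that case $\Omega_s=\emptyset$ and the corollary is vacuous. This is the tool the paper supplies and is the clean way to secure $v\geq 2$. Second, your inequality $u\leq i_0+v\leq\mathrm{Area}(X)$ is asserted but not established: when $i_0=1$ you are forced to take $u=v+1$, which exceeds $\mathrm{Area}(X)$ whenever the relevant $B$-cycle has full length $n$. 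This edge case (together with the normalization of whether $i=1$ is admitted in $\Omega_s$) is glossed over by the paper as well, but as written your bound on $u$ does not follow from what you prove.
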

\subsection{Left codes and Tori}
By the following propositions and corollaries, we can briefly understand the left codes of a square-tiled surface $X$.
\begin{prop}
If $[a_1,a_2,\ldots,a_k]$ and $[a_1,a_2, \ldots,a_k,a_{k+1}]$ are both ssc's at $m,$ $X$ contains a torus.
\end{prop}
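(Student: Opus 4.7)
My plan is to show that the area-one cylinder $C_1$ supplied by the first scc already closes up to a torus subsurface of $X$, with the second scc furnishing the numerical input that makes this possible.  First I would translate the data into $\mathbb{R}^2$: by the continued-fraction conventions of Section~4.2, the rationals $r_1=[a_1,\dots,a_k]=p_1/q_1$ and $r_2=[a_1,\dots,a_k,a_{k+1}]=p_2/q_2$ are Farey neighbors, so $|p_2q_1-p_1q_2|=1$; after relabelling I may take $r_1<_n r_2$.  The two scc-at-$m$ hypotheses, combined with Remark~3.2.1, give for each $i\in\{1,2\}$ a cylinder $C_i$ of area~$1$ in direction $(q_i,p_i)$ whose left boundary is a single closed saddle connection $\gamma_i$ based at $P_m$.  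Since a cylinder in a coprime direction $(q,p)$ on a square-tiled surface has width an integer multiple of $1/\sqrt{q^2+p^2}$ and satisfies area $=$ width $\times$ circumference $=1$, I obtain $|\gamma_i|=\sqrt{q_i^2+p_i^2}$ and width of $C_i$ equal to $1/\sqrt{q_i^2+p_i^2}$.

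The main geometric step is to develop the parallelogram $\Pi$ spanned by $(q_1,p_1)$ and $(q_2,p_2)$, based at $P_m$, into $X$ and identify its image with $C_1$.  The area of $\Pi$ is $|p_2q_1-p_1q_2|=1$, equal to the area of $C_1$.  The perpendicular distance from $(q_2,p_2)$ to the line through the origin in direction $(q_1,p_1)$ is $|p_1q_2-p_2q_1|/\sqrt{q_1^2+p_1^2}=1/\sqrt{q_1^2+p_1^2}$, which is precisely the width of $C_1$.  Rotating coordinates so that $(q_1,p_1)$ is horizontal, $\Pi$ therefore sits inside the horizontal strip $\mathbb{R}\times\bigl[0,\,1/\sqrt{q_1^2+p_1^2}\bigr]$---the universal cover of $C_1$---as a slanted fundamental domain for the deck translation by $(\sqrt{q_1^2+p_1^2},0)$, so it projects injectively onto $C_1$.

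It remains to identify the right boundary $\partial_R$ of $C_1$.  The top-left corner $(q_2,p_2)$ of $\Pi$ lies on the top edge of the strip, so its image in $X$---the endpoint of $\gamma_2$, which is $P_m$ itself---lies on $\partial_R$.  Uniqueness of the directed saddle connection from $P_m$ in direction $(q_1,p_1)$ then forces the saddle connection on $\partial_R$ emanating from $P_m$ to be $\gamma_1$; a length count (both segments have length $\sqrt{q_1^2+p_1^2}$, since the two boundary circles of a cylinder have equal length) upgrades this to the equality $\partial_R=\gamma_1$ as a whole.  Hence $\overline{C_1}$ is a cylinder with both boundary circles glued to the single saddle connection $\gamma_1$, i.e.\ a torus subsurface of $X$ with $\gamma_1$ as a waist curve, as required.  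The step I expect to be the main obstacle is the injectivity of the developing map $\Pi\to C_1$: it depends on the exact match between $|p_2q_1-p_1q_2|=1$ and the width-circumference dimensions of $C_1$, together with careful use of the ``left boundary'' convention from Section~3.2; once that is secured, the topological conclusion follows immediately.
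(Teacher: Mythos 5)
Your route is genuinely different from the paper's: the paper never touches the cylinder geometry, but instead uses the recursion $\mathsf{Code}^L([a_1,\ldots,a_k,a_{k+1}])=\mathsf{Code}^L([a_1,\ldots,a_k])^{a_{k+1}}\mathsf{Code}^L([a_1,\ldots,a_{k-1}])$ (in the appropriate order of the factors) to descend from the two scc hypotheses to the statement that $[a_1]$ and $[a_1+1]$ are both scc's at $m$, and then computes directly that $\sigma(m)=\tau(m)=m$, so the single square $Q_m$ is already a torus component. Most of your geometric setup is defensible --- the width/circumference count, the development of $\Pi$ into $C_1$, and the identification of the left edge of $\Pi$ with $\gamma_2$ (hence of the image of the corner $(q_2,p_2)$ with $P_m$, using that the left code of $r_2$ fixes $m$). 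But the final step has a genuine gap: the ``uniqueness of the directed saddle connection from $P_m$ in direction $(q_1,p_1)$'' is exactly what fails if $P_m$ is a cone point. If $P_m$ has cone angle $2\pi c$ with $c>1$, then $c$ distinct squares have their south-west corners identified with $P_m$, there are $c$ distinct geodesic rays leaving $P_m$ ``in direction $(q_1,p_1)$,'' and correspondingly there can be several distinct closed saddle connections of holonomy $(q_1,p_1)$ through $P_m$. Knowing that $\partial_R$ is a single saddle connection of holonomy $(q_1,p_1)$ containing the marked point $P_m$ therefore only gives $\partial_R=\gamma_{m'}$ for some $m'$ whose south-west corner is glued to $P_m$; ruling out $m'\neq m$ amounts to showing $P_m$ is a regular point, which is essentially the conclusion you are trying to reach, so as written the step is circular.

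To repair it you would need an additional argument, for instance an angle count: all four corners of $\Pi$ develop to $P_m$ and their interior angles sum to $2\pi$, so if you can show the four corner sectors close up cyclically around $P_m$ (which again requires identifying the top edge of $\Pi$ with $\gamma_1$ and the right edge with $\gamma_2$ --- the same crux), then $P_m$ has cone angle exactly $2\pi$ and the image of $\Pi$ is a torus component. The paper's algebraic descent sidesteps this entirely and in fact yields the sharper conclusion that the torus is the single square $Q_m$ glued to itself.
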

\begin{proof} Since
\begin{align*}
&\mathsf{Code}^L\left([a_1,a_2,\ldots,a_k,a_{k+1}]\right)\\
&=\left\{\begin{array}{cc}
\mathsf{Code}^L\left([a_1,a_2,\ldots,a_{k}]\right)^{a_{k+1}}\mathsf{Code}^L\left([a_1,a_2,\ldots,a_{k-1}]\right) & \mbox{ when }k>1\mbox{ is even} \\
\mathsf{Code}^L\left(\left[a_1,a_2,\ldots, a_{k-1}\right]\right)\mathsf{Code}^L\left([a_1,a_2,\ldots,a_{k}]\right)^{a_{k+1}} & \mbox{ when }k>1\mbox{ is odd,}
\end{array}\right.
\end{align*}

these imply that both $[a_1,a_2,\ldots,a_k]$ and $[a_1,a_2,\ldots,{a_{k-1}}]$ are ssc's at $m.$ Inductively, we have the conclusion that both $[a_1]$ and $[a_{1}+1]$ are scc's at $m$. This implies that $\sigma$ fixes $m:$
\begin{align*} m=&\mathsf{Code}^L\left([a_1+1]\right)\mathsf{Code}^L\left([a_1]\right)^{-1}(m)=(\sigma^{a_1}\tau\sigma)(\sigma^{a_1-1}\tau\sigma)^{-1}(m)\\=&(\sigma^{a_1}\tau\sigma)(\sigma^{-1}\tau^{-1}\sigma^{1-a_1})(m)=\sigma(m)\end{align*}
Moreover, $\tau$ also fixes $m$ since $\mathsf{Code}^L\left([a_1+1]\right)=\sigma^{a_1}\tau\sigma$ and therefore $X$ contains a torus.\end{proof}
\begin{cor}
If both $[a_1,a_2,\ldots,a_k]$ and $[a_1,a_2,\ldots,a_k-1]$ are sccs at $m\leq n,$ then $X$ contains a torus.
\end{cor}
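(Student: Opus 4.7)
The plan is to reduce the hypothesis of the corollary to the hypothesis of Proposition 4.3.1 by first producing the intermediate scc $[a_1,\ldots,a_{k-1}]$ at $m$. The key ingredients are the Farey-addition description of neighboring convergents recalled right before 4.3.1, together with the multiplicativity of $\mathsf{Code}^L$ under Farey addition given by Proposition 4.2.1.

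Concretely, write $r = [a_1,\ldots,a_k]$, $r' = [a_1,\ldots,a_k-1]$, and $r'' = [a_1,\ldots,a_{k-1}]$. The fact on continued fraction neighbors cited in the paper gives $r = r'\oplus r''$ when $k$ is even and $r = r''\oplus r'$ when $k$ is odd. Applying Proposition 4.2.1 then yields
\[
\mathsf{Code}^L(r) \;=\; \mathsf{Code}^L(r')\,\mathsf{Code}^L(r'')
\qquad\text{or}\qquad
\mathsf{Code}^L(r) \;=\; \mathsf{Code}^L(r'')\,\mathsf{Code}^L(r'),
\]
according to the parity of $k$. By hypothesis both $\mathsf{Code}^L(r)$ and $\mathsf{Code}^L(r')$ fix $m$, so regardless of the ordering $\mathsf{Code}^L(r'')$ must also fix $m$; that is, $[a_1,\ldots,a_{k-1}]$ is itself an scc at $m$.

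With this in hand, $[a_1,\ldots,a_{k-1}]$ and $[a_1,\ldots,a_{k-1},a_k]$ are both sccs at $m$, which is exactly the hypothesis of Proposition 4.3.1; invoking it gives that $X$ contains a torus. The one subtlety is the degenerate case $k=1$, where the ``truncation'' $[a_1,\ldots,a_{k-1}]$ is empty. In that situation the hypothesis says that $[a_1]$ and $[a_1-1]$ are both sccs at $m$, which, after relabelling $a_1-1$ as a new $a_1$, coincides with the step ``$[a_1]$ and $[a_1+1]$ are both sccs at $m$'' that appears in the proof of 4.3.1 and already forces $\sigma(m)=\tau(m)=m$; so this base case can either be subsumed into 4.3.1 or handled by the same explicit computation.

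I do not expect any real obstacle: the argument is essentially one application of Farey additivity followed by quoting 4.3.1. The only genuine bookkeeping is to track, as a function of the parity of $k$, on which side of the product the factor $\mathsf{Code}^L(r'')$ sits, so that one can conclude unambiguously that it fixes $m$.
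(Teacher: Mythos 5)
Your proposal is correct and follows essentially the same route as the paper: both decompose $[a_1,\ldots,a_k]$ via Farey addition into $[a_1,\ldots,a_k-1]$ and $[a_1,\ldots,a_{k-1}]$ (with the order depending on the parity of $k$), conclude that the truncation $[a_1,\ldots,a_{k-1}]$ is also an scc at $m$, and then invoke Proposition 4.3.1. Your explicit treatment of the degenerate case $k=1$ is a reasonable counterpart to the paper's ``otherwise'' branch.
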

\begin{proof} Since
\begin{align*}&\mathsf{Code}^L\left([a_1,a_2,\ldots,a_k]\right)\\&=\left\{\begin{array}{lll}
\mathsf{Code}^L\left(\left[a_1,a_2,\ldots,a_k-1\right]\right)\mathsf{Code}^L\left([a_1,a_2,\ldots,a_{k-1}]\right) & \mbox{ when }k>1\mbox{ is even,} \\
\mathsf{Code}^L\left(\left[a_1,a_2,\ldots,a_{k-1}\right]\right)\mathsf{Code}^L\left([a_1,a_2,\ldots,a_{k}-1]\right) & \mbox{ when }k>1\mbox{ is odd,} \\
\mathsf{Code}^L\left(\left[a_1,a_2-1\right]\right)\mathsf{Code}^L\left([a_1]\right) & \mbox{ otherwise, }
\end{array} \right.\end{align*}
by the conclusion of proposition 4.3.1, we get the desired result.\end{proof}
\begin{prop}
If $[a_1,a_2,\ldots,a_k]$ and $[a_1,a_2,\ldots,a_{k-1}]$ are elements in a cyclic group $<\omega>\subset{S_n}$, $X$ is the disjoint union of tori.
\end{prop}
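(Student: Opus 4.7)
The plan is to descend, via the Farey three-term recursion for left codes, to the very bottom of the continued fraction and then extract $\sigma\tau=\tau\sigma$ by hand. Write $L_j:=\mathsf{Code}^L([a_1,\ldots,a_j])$ for $j\geq 1$. By hypothesis, $L_k$ and $L_{k-1}$ both lie in the abelian subgroup $\langle\omega\rangle\subset S_n$, and in particular commute.

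I begin with the downward induction. The parity-dependent three-term relation already invoked in the proof of Proposition 4.3.1, namely
\[
L_{j+1} \;=\; L_j^{a_{j+1}}L_{j-1}\quad\text{or}\quad L_{j-1}L_j^{a_{j+1}},
\]
is immediate from Proposition 4.2.1 and expresses $L_{j-1}$ as a product of $L_j^{\pm a_{j+1}}$ and $L_{j+1}^{\pm 1}$. Consequently, if $L_j$ and $L_{j+1}$ both belong to $\langle\omega\rangle$, so does $L_{j-1}$. Iterating downward from the pair $(L_k,L_{k-1})$ yields $L_j\in\langle\omega\rangle$ for every $1\leq j\leq k$; in particular $L_1$ and $L_2$ both lie in $\langle\omega\rangle$.

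At the bottom I need the explicit identity
\[
L_2 \;=\; \sigma\cdot L_1^{a_2},
\]
obtained by iterating Proposition 4.2.1 through the Farey neighbors of $[a_1,a_2]$ down to $[a_1,1]=[a_1+1]$, or, equivalently, by formally setting $L_0:=\mathsf{Code}^L(\infty)=\sigma$ and extending the three-term recursion to $j=1$. Combined with $L_2\in\langle\omega\rangle$ this gives $\sigma=L_2L_1^{-a_2}\in\langle\omega\rangle$. Since $L_1=\sigma^{a_1-1}\tau\sigma$ (the value of $\mathsf{Code}^L$ on slope $1/a_1$), one then reads off $\tau=\sigma^{-(a_1-1)}L_1\sigma^{-1}\in\langle\omega\rangle$. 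Because $\langle\omega\rangle$ is abelian, $\sigma$ and $\tau$ commute, so $\Theta=\sigma^{-1}\tau^{-1}\sigma\tau=e$, every marked point has cone angle $1$, and $X$ is a disjoint union of flat tori.

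The main obstacle is the base case $L_2=\sigma L_1^{a_2}$: the recursion from the proof of Proposition 4.3.1 is stated only for $k>1$, so this identity sits just outside the range of directly quoted results and has to be confirmed by hand, either by a short $\mathsf{Cut}$ computation for the slope $a_2/(a_1a_2+1)$ or by checking that Proposition 4.2.1 extends to the Farey boundary value $1/0$ with $\mathsf{Code}^L(\infty)=\sigma$. Once this extension is in place, every other step is routine algebraic manipulation inside the abelian group $\langle\omega\rangle$.
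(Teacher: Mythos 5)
Your proposal is correct and follows essentially the same route as the paper: descend through the continued fraction via the parity-dependent three-term recursion to conclude that the bottom-level left codes lie in $\langle\omega\rangle$, extract $\sigma$ (the paper uses $\mathsf{Code}^L([a_1+1])=\sigma\,\mathsf{Code}^L([a_1])$, which is your identity $L_2=\sigma L_1^{a_2}$ unwound one step) and then $\tau$ from $L_1=\sigma^{a_1-1}\tau\sigma$, and conclude $\sigma\tau=\tau\sigma$ so that no marked point is critical. Your explicit treatment of the boundary case at the bottom of the recursion is a point the paper glosses over, but it is the same argument.
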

\begin{proof}
Suppose $k>2$ is an even number. Since $$\mathsf{Code}^L\left([a_1,a_2,\ldots,a_k]\right)=\mathsf{Code}^L\left([a_1,a_2,\ldots,a_{k-2}]\right)\mathsf{Code}^L\left([a_1,a_2,\ldots,a_{k-1}]\right)^{a_k},$$ $\mathsf{Code}^L\left([a_1,a_2,\ldots,a_{k-2}]\right)$ is also in the group $<\omega>.$   Inductively, the left codes of $[a_1]$ and $[a_1+1]$ are also in $<\omega>.$ Since $\mathsf{Code}^L\left([a_1+1]\right)=\sigma\mathsf{Code}^L\left([a_1]\right)$, this implies that $\sigma=\omega^{\alpha}$ and $\tau=\omega^{\beta}.$ These faces show that $\sigma\tau=\tau\sigma$ and thus all marked points are not critical points. When $k>2$ is an odd number, by the fact that $$\mathsf{Code}^L\left([a_1,a_2,\ldots,a_k]\right)=\mathsf{Code}^L\left([a_1,a_2,\ldots,a_{k-1}]\right)^{a_k}\mathsf{Code}^L\left([a_1,a_2,\ldots,a_{k-2}]\right) $$the proof is also similar to the above and thus the result follows.\end{proof}
\begin{cor}
Suppose that $0\leq r'<_{n}r''\leq 1$ are rational numbers and $r:=r'\oplus r''$. If any two elements in $\{\mathsf{Code}^L\left(r\right),\mathsf{Code}^L\left(r'\right),\mathsf{Code}^L\left(r''\right)\}$ are in a cyclic subgroup of $S_n$, the surface $X$ is the disjoint union of tori.
\end{cor}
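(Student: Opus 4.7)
The plan is to reduce to Proposition 4.3.3 via the multiplicativity of left codes under Farey addition (Proposition 4.2.1). Set $A = \mathsf{Code}^L(r')$, $B = \mathsf{Code}^L(r'')$, and $C = \mathsf{Code}^L(r)$. Since $r' <_n r''$ and $r = r' \oplus r''$, Proposition 4.2.1(1) gives the relation $C = AB$.

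The first step is to observe that if any two of $\{A, B, C\}$ lie in a cyclic subgroup $\langle \omega \rangle \subset S_n$, then so does the third. Indeed, a cyclic subgroup is an abelian subgroup closed under inversion and multiplication, so whichever two are assumed to lie in $\langle\omega\rangle$, the third is recovered as a product or as the product of one with the inverse of another (e.g., $B = A^{-1}C$). Thus under the hypothesis all three permutations $A, B, C$ lie in $\langle \omega \rangle$.

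Next I would invoke the structure of neighbors in the Farey tree. Writing $r \in [0,1]$ as a continued fraction $r = [a_1, a_2, \ldots, a_k]$ with the convention on $a_k$ stated in $\S 4.2$, the two Farey neighbors of $r$ are exactly $[a_1, \ldots, a_{k-1}]$ and $[a_1, \ldots, a_k - 1]$. Hence one of $r'$ or $r''$ equals $[a_1, \ldots, a_{k-1}]$, and the left codes
\[
\mathsf{Code}^L([a_1, \ldots, a_k]) \quad \text{and} \quad \mathsf{Code}^L([a_1, \ldots, a_{k-1}])
\]
both lie in $\langle \omega \rangle$. This is precisely the hypothesis of Proposition 4.3.3, which yields that $X$ is a disjoint union of tori.

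I do not expect a genuine obstacle: the content is that $C = AB$ forces the three codes to be interchangeable modulo membership in an abelian subgroup, and then the corollary is a direct application of Proposition 4.3.3. The only mild care needed is bookkeeping — confirming that $r'$ and $r''$ correspond to the two continued-fraction neighbors of $r$ regardless of the parity of $k$ (the two cases $r = r' \oplus r''$ with $k$ even or $k$ odd both produce the same unordered pair of neighbors), so that Proposition 4.3.3 applies in either situation without modification.
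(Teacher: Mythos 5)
Your proof is correct and follows the route the paper evidently intends (the paper states this corollary without proof, immediately after the proposition you invoke): the relation $\mathsf{Code}^L(r)=\mathsf{Code}^L(r')\,\mathsf{Code}^L(r'')$ from Proposition 4.2.1 forces all three codes into the cyclic subgroup, and identifying $\{r',r''\}$ with the continued-fraction neighbors $[a_1,\ldots,a_k-1]$ and $[a_1,\ldots,a_{k-1}]$ of $r$ reduces the claim to the proposition on $[a_1,\ldots,a_k]$ and $[a_1,\ldots,a_{k-1}]$. Note only that the proposition you want is numbered 4.3.2 in the paper (there is no Proposition 4.3.3), and that the degenerate case $r'=0$, where $[a_1,\ldots,a_{k-1}]$ is the empty continued fraction, deserves a word --- though the paper itself glosses over it.
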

\begin{cor}
For any connected square-tiled surface $X$ with genus $>1$, the left code of any rational number is not $(1)\in S_n.$
\end{cor}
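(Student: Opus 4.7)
The plan is to argue by contradiction: assume there exists a rational number $r$ with $\mathsf{Code}^L(r) = (1) = e$, the identity element of $S_n$, and deduce that the connected surface $X$ must in fact be a single torus, contradicting $\mathrm{genus}(X) > 1$. By the reflection across $x = y$ (the symmetry the paper uses throughout \S 4.2--4.3 to restrict attention to $r \in [0,1]$), we may assume $r \in [0,1]$.

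For the main case $r \in (0,1)$, I would express $r = r' \oplus r''$ as a Farey sum of its two neighbors $0 \leq r' <_n r'' \leq 1$. The crucial---and essentially trivial---observation is that the identity element lies in every subgroup of $S_n$; in particular, both $\mathsf{Code}^L(r) = e$ and $\mathsf{Code}^L(r')$ belong to the cyclic subgroup $\langle \mathsf{Code}^L(r') \rangle \subset S_n$. Corollary 4.3.1 then gives that $X$ is a disjoint union of tori, and connectedness of $X$ forces it to be a single genus-$1$ torus, yielding the desired contradiction.

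The boundary cases $r \in \{0,1\}$ lie outside the scope of Corollary 4.3.1 (the Farey neighbors of $1$ escape $[0,1]$, and $0$ has no Farey predecessors in $[0,1]$), so I would treat them by direct computation. By Proposition 3.2.1 together with Definitions 3.1.1 and 3.2.1, one computes $\mathsf{Code}^L(0) = \sigma^{-1}\tau\sigma$ and $\mathsf{Code}^L(1) = \tau\sigma$ (using $\mathsf{Cut}(1) = e$); setting either equal to $e$ forces $\tau = e$ or $\tau = \sigma^{-1}$ respectively, and in both subcases $\sigma$ and $\tau$ commute. Hence the commutator $\Theta = \sigma^{-1}\tau^{-1}\sigma\tau$ is trivial, so by the description of critical points in \S 2 the holomorphic $1$-form $\omega$ has no zeros; a connected translation surface on which $\omega$ is nowhere vanishing is a flat torus of genus $1$, again a contradiction.

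The only nontrivial step is recognizing that triviality of $\mathsf{Code}^L(r)$ trivially places it inside any cyclic subgroup, which is exactly the hypothesis Corollary 4.3.1 needs---so the interior case is essentially free. The edge cases $r \in \{0,1\}$ are unavoidable because their Farey decompositions escape the range $[0,1]$ in which the continued-fraction machinery of \S 4.3 is set up, but they reduce at once to the observation that $\sigma$ and $\tau$ commute and thus leave $\omega$ unzeroed.
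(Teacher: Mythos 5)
Your argument is correct and matches the route the paper intends: the corollary is stated immediately after the cyclic-subgroup criterion with no written proof, and the intended deduction is exactly your observation that $\mathsf{Code}^L(r)=e$ and $\mathsf{Code}^L(r')$ trivially lie in $\langle\mathsf{Code}^L(r')\rangle$, so the surface decomposes into tori, contradicting connectedness and genus $>1$; your explicit treatment of the boundary slopes $0$ and $1$ (where the Farey decomposition leaves $[0,1]$) is a welcome addition the paper omits. Note only that the cyclic-subgroup criterion you invoke is Corollary 4.3.2 in the paper's numbering, not Corollary 4.3.1.
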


\section{The closed system of square-tiled surfaces}
In Section 4.1, we used the Farey addition to compute the cutting sequence of a given line segment with rational slope. The Farey addition reduced the complexity of the computation of cutting sequences and record all left codes more systematically. In this section, we introduce the closed system of a given square-tiled surface. It records all left codes of $X$ and the relations between these permutation induced by the Farey addition. We will then discuss the  $SL_2^{+}(\mathbb{Z})$ action on the closed system of $X$. This gives a group structure on the collection of left codes of matrices in $SL_2^{+}(\mathbb{Z})$ and $Veech(X)$.

\subsection{The closed systems of square-tiled surfaces}
Let $X=X(\sigma,\tau)$ be a given square-tiled surface and suppose that $\frac{p}{q}<_n\frac{p_1}{q_1}$, $\omega=\mathsf{Code}^L\left(\frac{p}{q}\right)$ and $\omega_1=\mathsf{Code}^L\left(\frac{p_1}{q_1}\right)$. Note that for each positive integer $k$, we have $$\frac{p}{q}<_n\ldots<_n\frac{kp+p_1}{kq+q_1}<_n\ldots<_n\frac{p+p_1}{q+q_1}<_n\frac{p+kp_1}{q+kq_1}<_n\frac{p_1}{q_1}$$ and thus  $\mathsf{Code}^L(\frac{kp+p_1}{kq+q_1})=\omega^k\omega_1.$ If the order of $\omega$ equals $m$, $$\mathsf{Code}^L\left(\frac{kp+p_1}{kq+q_1}\right)=\mathsf{Code}^L\left(\frac{[k]_mp+p_1}{[k]_mq+q_1}\right).$$
In order to record all possible left codes of the rational numbers $\frac{kp+p_1}{kq+q_1}$, we define the ring diagram generated by two permutations as following:
\begin{defi} Let $u$ and $v$ be two elements in $S_n$ and $k+1$ be the order of $v.$ The ring diagram $B(v,u)$ is  shown in figure 4. Moreover, $v$ is called the center and each $v^mu$ is called a vertex of the ring diagram $B(v,u).$
\end{defi}
\begin{figure}[h]
\centering
\includegraphics[width=0.3\textwidth]{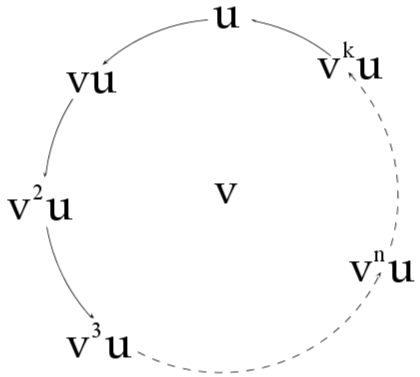}
\caption{The ring diagram $B(v,u)$}
\end{figure}
Obviously, for any integer $n$, the ring diagrams $B(u,v)$ and $B(u,u^nv)$ are the same.

\begin{exam}
Assume that $X=X((2,3),(1,2,4))$. Since $\mathsf{Code}^L\left(\frac{1}{1}\right)=(1,3,2,4)$ and $\mathsf{Code}^L\left(\frac{1}{0}\right)=(1,3,4),$ we have  $$\left\{\mathsf{Code}^L\left(\frac{\displaystyle k+1}{\displaystyle k}\right):k\in\mathbb{N}\right\}=
\{(1,3,4),(1,4,3,2),(1,2,3),(2,4)\}. $$ Figure 5 shows the ring diagram $B(\mathsf{Code}^L\left(\frac{1}{1}\right),\mathsf{Code}^L\left(\frac{1}{0}\right))$.
\begin{figure}[h]
\centering
\includegraphics[width=0.3\textwidth]{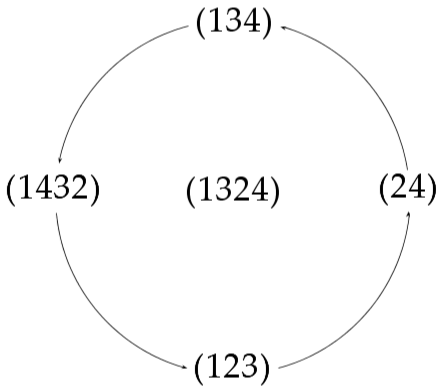}
\caption{The ring diagram $B((1324),(134))$}
\end{figure}
\end{exam}

\begin{defi}[The closed system of square-tiled surfaces]
Let $X=X(\sigma,\tau)$ be a square-tiled surface. The closed system of $X$, denoted by $\Omega_{(\sigma,\tau)}$ (or $\Omega_X$), is the collection of all ring diagrams $B\left(\mathsf{Code}^L\left(\frac{p}{q}\right),\mathsf{Code}^L\left(\frac{p_1}{q_1}\right)\right)$ where $\frac{p}{q}<_n\frac{p_1}{q_1}$ are two rational numbers. Moreover, for any $\omega\in S_n$,  the set $\omega\Omega_{X}\omega^{-1}$ is the collection of   $ B\left(\omega \mathsf{Code}^L\left(\frac{p}{q}\right)\omega^{-1},\omega \mathsf{Code}^L\left(\frac{p_1}{q_1}\right)\omega^{-1}\right).$
\end{defi}

We can obtain the closed system $\Omega_{(\sigma,\tau)}$ of $X(\sigma,\tau)$ by the following method: Let $A_1:=\{B(\sigma,\sigma^{-1}\tau\sigma)\}$. Suppose $A_k$ has been defined, then the set $A_{k+1}$ is defined by $$\left\{B(\alpha^{m+1}\beta,\alpha^{m}\beta):\alpha^{m+1}\beta\mbox{ and }\alpha^{m}\beta\ \mbox{ are vertices of }B(\alpha,\beta)\in{A_{k}\setminus A_{j},\forall\mbox{}j<k }\right\}.$$ Then the  closed system $\Omega_{(\sigma,\tau)}$ is the set $\bigcup_{j} A_j$.

Since $S_n$ is a finite group, the closed system generated by any two permutation elements is a finite union of ring diagrams. Next, we want to give some properties of the closed system of a surface $X=X(\sigma,\tau).$

\begin{prop}
Consider the square-tiled surface $X=X(\sigma.\tau)$. Let $\alpha^k\beta$ and $\alpha^{k+1}\beta$ be vertices in $B(\alpha,\beta)\in\Omega_{(\sigma,\tau)}$, then for the square-tiled surface $Y:=X(\alpha^{k+1}\beta,\alpha^{-1})$ $$B(\alpha,\beta)\in \Omega_{(\alpha^{k+1}\beta,\alpha^{-1})}.$$
\end{prop}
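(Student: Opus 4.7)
The plan is to construct $B(\alpha,\beta)$ inside $\Omega_Y$ step by step from the initial ring, using the Farey-like recursion that defines the closed system. First, I would compute the initial ring of $\Omega_Y$. Substituting $\sigma_Y = \alpha^{k+1}\beta$ and $\tau_Y = \alpha^{-1}$ into $A_1^Y = \{B(\sigma_Y, \sigma_Y^{-1}\tau_Y\sigma_Y)\}$, a direct conjugation gives $\sigma_Y^{-1}\tau_Y\sigma_Y = (\alpha^{k+1}\beta)^{-1}\alpha^{-1}(\alpha^{k+1}\beta) = \beta^{-1}\alpha^{-1}\beta$, hence $A_1^Y = \{B(\alpha^{k+1}\beta, \beta^{-1}\alpha^{-1}\beta)\}$.

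The key simplification comes from the equivalence $B(u,v) = B(u, u^n v)$ noted after Definition 5.1.1, applied with $u = \alpha^{k+1}\beta$ and $n=1$: since
$$(\alpha^{k+1}\beta)\cdot(\beta^{-1}\alpha^{-1}\beta) = \alpha^{k+1}\alpha^{-1}\beta = \alpha^k\beta,$$
the initial ring can be rewritten as $B(\alpha^{k+1}\beta, \alpha^k\beta)$. Strikingly, its center and base vertex are exactly the two adjacent vertices of $B(\alpha,\beta)\in\Omega_X$ singled out in the hypothesis. From here I would iterate: the adjacent pair at positions $(m=-1, m=0)$ of $B(\alpha^{k+1}\beta, \alpha^k\beta)$ is $(\beta^{-1}\alpha^{-1}\beta, \alpha^k\beta)$, which produces the new ring $B(\alpha^k\beta, \beta^{-1}\alpha^{-1}\beta)$; by the same simplification this equals $B(\alpha^k\beta, \alpha^{k-1}\beta) \in A_2^Y$. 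Iterating this descent produces a chain $B(\alpha^j\beta, \alpha^{j-1}\beta) \in A_{k+2-j}^Y$ for each $j \in \{k+1, k, \ldots, 1, 0\}$, ending with $B(\beta, \alpha^{-1}\beta) \in A_{k+2}^Y$.

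The final step is to extract $B(\alpha, \beta)$ itself from this chain. Continuing the adjacent-vertex propagation one or more times should place $\alpha$ into the vertex set of a derived ring, which then serves as the center of a ring whose other relevant vertex is $\beta$; one last application of the equivalence $B(\alpha, v) = B(\alpha, \alpha^n v)$ identifies that ring as $B(\alpha, \beta)$. This last step is what I expect to be the main obstacle: in favourable cases (small order of $\beta$, or $\alpha$ already a vertex of $A_1^Y$) the target ring appears within one or two additional iterations of the construction, but in general one must track carefully how the orbits of $\langle \alpha^j\beta\rangle$ acting on the evolving vertex set interact with the equivalence to guarantee that $\alpha$ is eventually produced as a vertex adjacent to $\beta$. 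An appealing alternative, if direct tracking proves unwieldy, is to invoke the $SL_2^+(\mathbb{Z})$ action on closed systems developed in Section 5.2: the surface $Y$ is an $SL_2^+(\mathbb{Z})$-translate of $X$ by a matrix whose columns encode the Farey edge corresponding to the vertex pair $\alpha^k\beta, \alpha^{k+1}\beta$, and this transport carries the ring $B(\alpha,\beta)\in\Omega_X$ into $\Omega_Y$.
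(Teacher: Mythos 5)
Your opening computation is correct and coincides with the paper's own starting point: the initial ring of $\Omega_Y$ is $B(\alpha^{k+1}\beta,\beta^{-1}\alpha^{-1}\beta)=B(\alpha^{k+1}\beta,\alpha^k\beta)$, i.e.\ the ring on $\mathsf{Code}_Y^L\left(\frac{0}{1}\right)=\alpha^{k+1}\beta$ and $\mathsf{Code}_Y^L\left(\frac{1}{1}\right)=\alpha^k\beta$. But the argument is not complete: the step you yourself flag as ``the main obstacle'' is exactly the content of the proposition, and the descent you set up cannot reach it. Every ring in your chain $B(\alpha^{j}\beta,\alpha^{j-1}\beta)$ is obtained by taking a power of the current center times the current base vertex as the new center, so all centers you ever produce are of the form $\alpha^{j}\beta$; the target ring $B(\alpha,\beta)$ has center $\alpha$, and nothing in your construction forces $\alpha$ to appear. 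Your proposed fallback via the $SL_2^{+}(\mathbb{Z})$ action of Section 5.2 is also unavailable here: the invariance of closed systems under that action rests on Theorem 5.1.1 and Corollary 5.1.2, which are themselves deduced from this proposition, so invoking it would be circular.

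The paper closes the gap by moving around the ring in the other direction, i.e.\ multiplying on the right by $\mathsf{Code}_Y^L\left(\frac{1}{1}\right)=\alpha^k\beta$ rather than on the left by the center. Since $\frac{j-1}{j}<_n\frac{j}{j+1}<_n\frac{1}{1}$, Proposition 4.2.1 gives $\mathsf{Code}_Y^L\left(\frac{j}{j+1}\right)=\mathsf{Code}_Y^L\left(\frac{0}{1}\right)\mathsf{Code}_Y^L\left(\frac{1}{1}\right)^{j}=(\alpha^{k+1}\beta)(\alpha^k\beta)^{j}$. Choosing $j=m-1$ with $m$ the order of $\alpha^k\beta$ collapses this product to $(\alpha^{k+1}\beta)(\alpha^k\beta)^{-1}=\alpha$, so $\mathsf{Code}_Y^L\left(\frac{m-1}{m}\right)=\alpha$. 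Because $\frac{m-1}{m}<_n\frac{1}{1}$ is a Farey pair, the ring $B\left(\mathsf{Code}_Y^L\left(\frac{m-1}{m}\right),\mathsf{Code}_Y^L\left(\frac{1}{1}\right)\right)=B(\alpha,\alpha^k\beta)=B(\alpha,\beta)$ belongs to $\Omega_Y$ by the very definition of the closed system. That one computation --- picking the exponent to be one less than the order of $\alpha^k\beta$ so that $\alpha$ itself is realized as a left code on $Y$ --- is the idea your write-up is missing.
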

\begin{proof} Note that $\mathsf{Code}_Y^L\left(\frac{0}{1}\right)=\alpha^{k+1}\beta$ and $\mathsf{Code}_Y^L\left(\frac{1}{1}\right)=\alpha^k\beta$. For any positive integer $k$, we have $\frac{k-1}{k}<_n\frac{k}{k+1}<_n\frac{1}{1}$ and thus $$\mathsf{Code}_Y^L\left(\frac{k}{k+1}\right)=\mathsf{Code}_Y^L\left(\frac{k-1}{k}\right)\mathsf{Code}_Y^L\left(\frac{1}{1}\right).$$
Let $m$ be the order of $\mathsf{Code}_Y^L\left(\frac{1}{1}\right).$ Then we have
\begin{align*}
\mathsf{Code}_Y^L\left(\frac{m}{m+1}\right)&=\mathsf{Code}_Y^L\left(\frac{m-1}{m}\right)\mathsf{Code}_Y^L\left(\frac{1}{1}\right)\\
                                           &=\mathsf{Code}_Y^L\left(\frac{m-2}{m-1}\right)\mathsf{Code}_Y^L\left(\frac{1}{1}\right)^2\\
                                           &\vdots\\
                                           &=\mathsf{Code}_Y^L\left(\frac{0}{1}\right)\mathsf{Code}_Y^L\left(\frac{1}{1}\right)^m=\mathsf{Code}_Y^L\left(\frac{0}{1}\right)=\alpha^{k+1}\beta.
\end{align*}
This implies that $\mathsf{Code}_Y^L\left(\frac{m-1}{m}\right)=\alpha$. Since $\frac{m-1}{m}<_n\frac{1}{1}$, we have
$$B(\alpha,\alpha^{k}\beta)=B\left(\mathsf{Code}_Y^L\left(\frac{m-1}{m}\right),\mathsf{Code}_Y^L\left(\frac{1}{1}\right)\right)\subset\Omega_{(\alpha^{k+1}\beta,\alpha^{-1})}.$$ Thus the desired result follows because $B(\alpha,\alpha^k\beta)=B(\alpha,\beta).$ \end{proof}
\begin{cor}
$$\Omega_{(\alpha^{k+1}\beta,\alpha^{-1})}=\Omega_{(\alpha,\beta\alpha^{-1})}.$$
\end{cor}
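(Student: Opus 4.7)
My plan is to apply Proposition 5.1.1 in one direction, use the rule producing $A_2$ from $A_1$ in the other direction, and then invoke the tautology that a closed system equals the forward closure of its initial ring under the iteration rule.

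First I will identify the starting rings of both closed systems. For $X_2 := X(\alpha, \beta\alpha^{-1})$, direct computation gives $\sigma^{-1}\tau\sigma = \alpha^{-1}(\beta\alpha^{-1})\alpha = \alpha^{-1}\beta$, so by the identity $B(v, v^n u) = B(v, u)$ (with $n = 1$) the initial ring is $B(\alpha, \alpha^{-1}\beta) = B(\alpha, \beta)$. For $X_1 := X(\alpha^{k+1}\beta, \alpha^{-1})$, the analogous computation gives $\sigma^{-1}\tau\sigma = \beta^{-1}\alpha^{-1}\beta$, and since $(\alpha^{k+1}\beta)(\beta^{-1}\alpha^{-1}\beta) = \alpha^k\beta$, the initial ring simplifies to $B(\alpha^{k+1}\beta, \alpha^k\beta)$.

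Next I will transport each initial ring into the other closed system. Starting from $B(\alpha, \beta) \in A_1(X_2)$, the rule generating $A_2(X_2)$ produces the ring $B(\alpha^{m+1}\beta, \alpha^m\beta)$ for each consecutive vertex pair of $B(\alpha, \beta)$; taking $m = k$ places $B(\alpha^{k+1}\beta, \alpha^k\beta) \in \Omega_{X_2}$. Conversely, Proposition 5.1.1 with $X = X_2$, applied to the ring $B(\alpha, \beta) \in \Omega_{X_2}$ with its consecutive vertices $\alpha^k\beta, \alpha^{k+1}\beta$, concludes that $B(\alpha, \beta) \in \Omega_{(\alpha^{k+1}\beta, \alpha^{-1})} = \Omega_{X_1}$.

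To close the argument I will observe that the recursive definition $\Omega_X = \bigcup_j A_j$ makes $\Omega_X$ the forward closure of $A_1(X)$ under the rule $A_j \to A_{j+1}$. Hence if $\Omega_X$ contains the singleton initial set $A_1(Y)$ for another surface $Y$, it contains the entire forward closure of $A_1(Y)$, which equals $\Omega_Y$. Applying this with $(X, Y) = (X_1, X_2)$ and then $(X_2, X_1)$ yields both inclusions, so $\Omega_{X_1} = \Omega_{X_2}$. The one delicate point will be justifying this closure principle against the definition, which subtracts previously constructed layers when forming each $A_{k+1}$; this should just unwind from the construction, but deserves to be stated explicitly so the "forward closure of any ring it contains" interpretation is unambiguous.
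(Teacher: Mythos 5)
Your proposal is correct and follows the same route the paper intends: the corollary is stated without proof precisely because it is meant to combine Proposition 5.1.1 (giving $B(\alpha,\beta)\in\Omega_{(\alpha^{k+1}\beta,\alpha^{-1})}$) with the recursive construction of $\Omega_X$ as $\bigcup_j A_j$, and your identification of the initial rings $B(\alpha,\beta)$ and $B(\alpha^{k+1}\beta,\alpha^k\beta)$ together with the forward-closure observation is exactly that argument made explicit. Your flagged ``delicate point'' about the subtraction of earlier layers in the definition of $A_{k+1}$ does unwind as you expect, so nothing further is needed.
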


\begin{thm}
Suppose $B(\gamma,\delta)\in\Omega_{(\sigma,\tau)},$ then $$\Omega_{(\sigma,\tau)}=\Omega_{(\gamma,\delta\gamma^{-1})}.$$
\end{thm}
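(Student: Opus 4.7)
The plan is to prove both inclusions separately.

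The forward inclusion $\Omega_{(\gamma, \delta\gamma^{-1})} \subseteq \Omega_{(\sigma,\tau)}$ is a closure argument. The base ring diagram of $\Omega_{(\gamma, \delta\gamma^{-1})}$ is $B(\gamma, \gamma^{-1}(\delta\gamma^{-1})\gamma) = B(\gamma, \gamma^{-1}\delta)$, and by the identity $B(u,v)=B(u,u^n v)$ noted after Definition 5.1.1, this equals $B(\gamma,\delta)$, which lies in $\Omega_{(\sigma,\tau)}$ by hypothesis. Since $\Omega_{(\sigma,\tau)}$ is closed under the children operation $B(\alpha,\beta)\mapsto B(\alpha^{m+1}\beta,\alpha^m\beta)$ that iteratively generates $\Omega_{(\gamma,\delta\gamma^{-1})}$ from its base, everything in $\Omega_{(\gamma,\delta\gamma^{-1})}$ also lies in $\Omega_{(\sigma,\tau)}$.

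For the reverse inclusion I induct on the smallest index $j$ with $B(\gamma,\delta)\in A_j$, where $\Omega_{(\sigma,\tau)}=\bigcup_j A_j$ is the iterative construction. A preliminary observation: $\Omega_{(\gamma,\delta\gamma^{-1})}$ depends only on the ring diagram $B(\gamma,\delta)$ and not on the particular representative of $\delta$, since replacing $\delta$ by $\gamma^n\delta$ changes the base of the new closed system from $B(\gamma,\gamma^{-1}\delta)$ to $B(\gamma,\gamma^{n-1}\delta)$, which represents the same ring diagram. In the base case $j=1$, one has $B(\gamma,\delta)=B(\sigma,\sigma^{-1}\tau\sigma)$; choosing $\gamma=\sigma$ and $\delta=\sigma^{-1}\tau\sigma$ gives $\delta\gamma^{-1}=\sigma^{-1}\tau$, and the base of $\Omega_{(\sigma,\sigma^{-1}\tau)}$ is $B(\sigma,\sigma^{-2}\tau\sigma)=B(\sigma,\sigma^{-1}\tau\sigma)$, matching the base of $\Omega_{(\sigma,\tau)}$.

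For the inductive step $j\ge 2$, $B(\gamma,\delta)$ is a child of some $B(\alpha,\beta)\in A_{j-1}$, so after using the representative observation I may take $\gamma=\alpha^{m+1}\beta$ and $\delta=\alpha^m\beta$ for some $m$. The key computation is
\[
\delta\gamma^{-1}=(\alpha^m\beta)(\alpha^{m+1}\beta)^{-1}=\alpha^{-1},
\]
so
\[
\Omega_{(\gamma,\delta\gamma^{-1})}=\Omega_{(\alpha^{m+1}\beta,\alpha^{-1})}=\Omega_{(\alpha,\beta\alpha^{-1})}
\]
by Corollary 5.1.1, and applying the induction hypothesis to $B(\alpha,\beta)\in A_{j-1}$ yields $\Omega_{(\alpha,\beta\alpha^{-1})}=\Omega_{(\sigma,\tau)}$. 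The main obstacle is essentially conceptual: recognizing that Corollary 5.1.1 is the right induction tool and that the child-of relation produces precisely the pair $(\alpha^{m+1}\beta,\alpha^{-1})$ for which the corollary applies. Once this match is noticed, and once one is careful enough to normalize the representative of $\delta$ so that $\delta\gamma^{-1}$ collapses to $\alpha^{-1}$, the argument closes with no further calculation.
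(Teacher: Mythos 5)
Your proof is correct and follows essentially the same route as the paper: a descent through the levels $A_j$ of the iterative construction, using the identity $\delta\gamma^{-1}=\alpha^{-1}$ for a child $B(\alpha^{m+1}\beta,\alpha^{m}\beta)$ of $B(\alpha,\beta)$ together with Corollary 5.1.1 at each step. You additionally make explicit the base case, the independence of $\Omega_{(\gamma,\delta\gamma^{-1})}$ from the choice of representative of $\delta$, and the easy inclusion $\Omega_{(\gamma,\delta\gamma^{-1})}\subseteq\Omega_{(\sigma,\tau)}$, all of which the paper leaves implicit.
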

\begin{proof} Suppose $\Omega_{(\sigma,\tau)}=\bigcup_i A_i$ and $B(\gamma,\delta)\in A_j,$ then $\gamma$ and $\delta$ must be equal to $\alpha^{k+1}\beta$ and $\alpha^{k}\beta$ respectively. Thus $B(\alpha^{k+1}\beta,\alpha^{k}\beta)\in A_j$ and by corollary  5.1.1, we have $\Omega_{(\gamma,\delta\gamma^{-1})}=\Omega_{(\alpha^{k+1}\beta,\alpha^{-1})}=\Omega_{(\alpha,\beta\alpha^{-1})}.$ Note that $B(\alpha,\beta)\in A_{j-1}$ and thus there exist $\alpha_1$ and $\beta_1$ such that $\alpha=\alpha_1^{k_{1}+1}\beta_1$ and $\beta=\alpha_1^{k_{1}}\beta_1$. Applying corollary 5.1.1 again, we have $\Omega_{(\alpha,\beta\alpha^{-1})}=\Omega_{(\alpha_1^{k+1}\beta_1,\alpha_1^{-1})}=\Omega_{(\alpha_1,\beta_1\alpha_{1}^{-1})}$ and thus $\Omega_{(\gamma,\delta\gamma^{-1})}=\Omega_{(\alpha_1,\beta_1\alpha_1^{-1})}.$

Repeating this argument, we conclude that  $$\Omega_{(\gamma,\delta\gamma^{-1})}=\Omega_{(\alpha^{k+1}\beta,\alpha^{-1})}=\Omega_{(\alpha,\beta\alpha^{-1})}=\Omega_{(\alpha_1,\beta_1\alpha_{1}^{-1})}=\ldots=\Omega_{(\sigma,\tau)}.$$
\end{proof}

\begin{remk} Suppose that the left codes of two rational numbers $\frac{p}{q}<_n\frac{p_1}{q_1}$ are $\omega_1$ and $\omega_2$ respectively. Theorem 5.1.1 shows that since the ring diagram $B\left(\mathsf{Code}^L\left(\frac{p}{q}\right),\mathsf{Code}^L\left(\frac{p+p_1}{q+q_1}\right)\right)=B(\omega_1,\omega_1\omega_2)$ in $\Omega_X$,
the closed system of $X$ and $X(\omega_1,\omega_1\omega_2\omega_1^{-1})$ are equal. Moreover, by the fact that $X(\omega_1,\omega_1\omega_2\omega_1^{-1})=X(\omega_1\omega_1\omega_1^{-1},\omega_1\omega_2\omega_1^{-1})=X(\omega_1,\omega_2)$, we obtain the following corollary.
\end{remk}
\begin{cor}
Let $\frac{p}{q},$ $\frac{p_1}{q_1},$ $\omega_1$ and $\omega_2$ be defined in remark 5.1.1. Then $$\Omega_X=\omega_1\Omega_{(\omega_1,\omega_2)}\omega_1^{-1}.$$
\end{cor}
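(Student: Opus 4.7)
The plan is to combine Theorem 5.1.1 with a conjugation-equivariance property of the closed-system construction. First, by Proposition 4.2.1 we have $\mathsf{Code}^L\bigl(\tfrac{p+p_1}{q+q_1}\bigr) = \omega_1\omega_2$, so the ring diagram $B(\omega_1, \omega_1\omega_2)$ is, by definition, an element of $\Omega_X$. Applying Theorem 5.1.1 with $(\gamma,\delta) = (\omega_1, \omega_1\omega_2)$ gives immediately
$$\Omega_X \;=\; \Omega_{(\omega_1,\, \omega_1\omega_2\omega_1^{-1})}.$$
So the goal is reduced to identifying $\Omega_{(\omega_1,\omega_1\omega_2\omega_1^{-1})}$ with $\omega_1\Omega_{(\omega_1,\omega_2)}\omega_1^{-1}$.

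To close this gap I would establish the following general equivariance statement, which is the real content of the proof: for any $\omega,\sigma,\tau\in S_n$,
$$\Omega_{(\omega\sigma\omega^{-1},\, \omega\tau\omega^{-1})} \;=\; \omega\,\Omega_{(\sigma,\tau)}\,\omega^{-1}.$$
Taking $\sigma=\omega_1$, $\tau=\omega_2$, and $\omega=\omega_1$ specializes this to $\Omega_{(\omega_1,\omega_1\omega_2\omega_1^{-1})} = \omega_1\Omega_{(\omega_1,\omega_2)}\omega_1^{-1}$, which when chained with the previous equality yields the corollary.

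To prove the equivariance identity, I would argue conceptually from the equivalence of square-tiled surfaces introduced in Section~2: relabelling the squares by $i\mapsto\omega(i)$ turns $X(\sigma,\tau)$ into $X(\omega\sigma\omega^{-1},\omega\tau\omega^{-1})$, and every left code $\mathsf{Code}^L(r)$, being a permutation of square indices read off cyclically along left boundaries, is transformed into its conjugate by $\omega$. Hence each generating ring diagram $B\bigl(\mathsf{Code}^L(\tfrac{p}{q}),\mathsf{Code}^L(\tfrac{p_1}{q_1})\bigr)$ is carried to its $\omega$-conjugate, which matches the definition of $\omega\Omega_{(\sigma,\tau)}\omega^{-1}$ in Definition 5.1.2. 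Alternatively, the identity can be verified by induction on the stratification $\Omega = \bigcup_j A_j$: the base set $A_1=\{B(\sigma,\sigma^{-1}\tau\sigma)\}$ is visibly $\omega$-equivariant, and the inductive rule generating $A_{k+1}$ from vertices of ring diagrams in $A_k$ commutes with conjugation because $\omega(\alpha^{m+1}\beta)\omega^{-1}=(\omega\alpha\omega^{-1})^{m+1}(\omega\beta\omega^{-1})$.

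The main obstacle is precisely this equivariance statement, since it is not stated earlier and the definition of $\omega\Omega_X\omega^{-1}$ in Definition 5.1.2 is a priori only a formal conjugation of ring diagrams — we must check it genuinely matches the closed system of the conjugated surface. Once that bookkeeping is done, every other step is a direct appeal to Theorem 5.1.1 or Proposition 4.2.1.
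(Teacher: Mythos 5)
Your proposal is correct and follows essentially the same route as the paper: Remark 5.1.1 already derives $\Omega_X=\Omega_{(\omega_1,\omega_1\omega_2\omega_1^{-1})}$ from Proposition 4.2.1 and Theorem 5.1.1, and then obtains the corollary by observing that $(\omega_1,\omega_1\omega_2\omega_1^{-1})$ is the $\omega_1$-conjugate of $(\omega_1,\omega_2)$. The only difference is that you state and justify the conjugation-equivariance $\Omega_{(\omega\sigma\omega^{-1},\omega\tau\omega^{-1})}=\omega\Omega_{(\sigma,\tau)}\omega^{-1}$ explicitly, whereas the paper treats it as immediate from the equivalence of relabelled square-tiled surfaces; your verification of it is a worthwhile (and correct) addition, not a deviation.
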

\begin{prop}
Suppose that $B(\omega_1,\omega_2)$  is in the closed system $\Omega_X,$ then the surfaces $X$ and $Y=X(\omega_1,\omega_2\omega_1^{-1})$ are in the same stratum; that is, $X$ and $Y\in\mathcal{H}(a_1,a_2,\ldots,a_k).$
\end{prop}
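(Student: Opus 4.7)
The plan is to prove the stronger assertion that $\Theta_Y=\Theta_X$ as elements of $S_n$, where $\Theta=\sigma^{-1}\tau^{-1}\sigma\tau$ is the commutator governing the cone angles. Since the stratum $\mathcal{H}(a_1,\ldots,a_k)$ of a square-tiled surface is determined by the cycle structure of $\Theta$ (via the cone-angle description recalled at the end of Section 2), equality of the commutators will immediately yield $X,Y\in\mathcal{H}(a_1,\ldots,a_k)$.

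The first step is to record a general identity: for any pair $(\omega_1,\omega_2)$, writing $Y=X(\omega_1,\omega_2\omega_1^{-1})$ and using $(\omega_2\omega_1^{-1})^{-1}=\omega_1\omega_2^{-1}$, direct expansion gives
$$\Theta_Y=\omega_1^{-1}\cdot\omega_1\omega_2^{-1}\cdot\omega_1\cdot\omega_2\omega_1^{-1}=\omega_2^{-1}\omega_1\omega_2\omega_1^{-1}.$$
Thus the proposition reduces to showing $\omega_2^{-1}\omega_1\omega_2\omega_1^{-1}=\Theta_X$ for every $B(\omega_1,\omega_2)\in\Omega_X$.

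Next I would induct on the layered construction $\Omega_X=\bigcup_j A_j$. For the base layer $A_1=\{B(\sigma,\sigma^{-1}\tau\sigma)\}$, substituting $(\omega_1,\omega_2)=(\sigma,\sigma^{-1}\tau\sigma)$ into the identity and simplifying yields $\sigma^{-1}\tau^{-1}\sigma\tau=\Theta_X$. For the inductive step, assume a ring $B(\alpha,\beta)\in A_k$ satisfies $\beta^{-1}\alpha\beta\alpha^{-1}=\Theta_X$. Any ring added to $A_{k+1}$ from $B(\alpha,\beta)$ has the form $B(\alpha^{m+1}\beta,\alpha^m\beta)$; substituting $\omega_1'=\alpha^{m+1}\beta$ and $\omega_2'=\alpha^m\beta$ into the identity and using $\alpha^{-m}\cdot\alpha^{m+1}=\alpha$ together with $\alpha^m\cdot\alpha^{-(m+1)}=\alpha^{-1}$ collapses the expression to
$$(\omega_2')^{-1}\omega_1'\omega_2'(\omega_1')^{-1}=\beta^{-1}\alpha\beta\alpha^{-1},$$
which is independent of $m$ and equals $\Theta_X$ by the inductive hypothesis. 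Since every ring in $\Omega_X$ is either in $A_1$ or produced from an earlier ring by this construction, the induction concludes.

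The crucial point, and the reason the argument is so short, is that the vertices of a ring diagram lie on a single $\alpha$-orbit of the form $\alpha^m\beta$, and this is exactly what forces the $m$-dependence to cancel. I expect no deeper obstacle than bookkeeping this cancellation carefully; geometrically, it reflects the fact that moving between Farey neighbors corresponds to an $SL_2(\mathbb{Z})$ change of coordinates, which must preserve the stratum.
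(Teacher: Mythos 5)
Your proof is correct and follows essentially the same route as the paper: an induction on the layers $A_j$ of the closed system in which one computes the commutator $\sigma^{-1}\tau^{-1}\sigma\tau$ of each new pair and compares it with $\Theta_X$. Your version is in fact mildly sharper --- you obtain $\Theta_Y=\Theta_X$ on the nose rather than merely up to conjugacy, and the invariance of $\beta^{-1}\alpha\beta\alpha^{-1}$ under $\beta\mapsto\alpha^k\beta$ silently absorbs the paper's separate case for the ambiguity $B(\omega_1,\omega_2)=B(\omega_1,\omega_1^k\omega_2)$ --- but the underlying argument is the same.
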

\begin{proof} Since the closed system $\Omega_{(\sigma,\tau)}$ is the set $\bigcup_j A_j$, by the definition of each $A_i$, we only prove the followings:
\begin{enumerate}
  \item For any ring diagram $B(\omega_1,\omega_2)=B(\omega_1,\omega_{1}^{k}\omega_2)$ in $A_j,$ $X(\omega_1,\omega_2\omega_{1}^{-1})$ and $X(\omega_1,(\omega_{1}^{k}\omega_2)\omega_{1}^{-1})$ are in the same stratum.
  \item Suppose that the ring diagrams $B(\omega_1,\omega_2)\in A_j$ and $B(\omega_1^{k+1}\omega_2,\omega_1^{k}\omega_2)\in A_{j+1}$, then the surfaces $X(\omega_1,\omega_2\omega_{1}^{-1})$ and $X(\omega_1^{k+1}\omega_2,(\omega_{1}^{k}\omega_2)(\omega_{1}^{k+1}\omega_2)^{-1})( =X(\omega_1^{k+1}\omega_2,\omega_1^{-1}))$ are in the same stratum.
  \end{enumerate}

  For the first case, it is easy to get by the direct computation:
  $$\omega_1^{-1}(\omega_2\omega_{1}^{-1})^{-1}\omega_1(\omega_2\omega_1^{-1})=\omega_{2}^{-1}\omega_{1}\omega_{2}\omega_{1}^{-1}$$ and $$\omega_{1}^{-1}(\omega_{1}^{k}\omega_2)^{-1}\omega_1(\omega_{1}^{k}\omega_2)=\omega_1^{-1}\omega_{2}^{-1}\omega_{1}\omega_{2}.$$ (Note that these permutations are conjugate to each other.)\\
  For the second case, we also do the same computations and then get the desired result. To sum up, by the structure of the closed system of $X$,  the result follows.
\end{proof}
\begin{remk}
Define $S^+(X)$ to be the set
$$\left\{X(\omega_1,\omega_2)\mbox{ }|\mbox{ }\omega_1 \mbox{ and }\omega_2 \mbox{ are left codes of any rational numbers } 0\leq\frac{p}{q}<_n\frac{p_1}{q_1}\leq\infty\right\}.$$
Corollary 5.1.2 and proposition 5.1.2 indicate that up to conjugations, the closed system of any surface $Y\in{S^+(X)}$ is equal to $\Omega_X$ and $S^+(X)$ is a finite subset of $\mathcal{H}(a_1,a_2,\ldots,a_k).$ \end{remk}
Let $X_{90k}$ be the sqrare-tiled surface obtained by rotating $X$ $90k$ degrees clockwise. Then for $k=0,1,2\mbox{ and }3$, we have
\begin{align*}
&X=X_0=X(\sigma,\tau),\mbox{ }X_{90}=X(\tau,\sigma^{-1}),\\&X_{180}=X(\sigma^{-1},\tau^{-1}),\mbox{ }X_{270}=X(\tau^{-1},\sigma).
\end{align*}
\begin{prop}
Let $X=X(\sigma,\tau)$, then closed systems of $X$ and $X_{90}$ are conjugate to each other. Moreover, $$\Omega_X=\sigma^{-1}\Omega_{X_{90}} \sigma.$$
\end{prop}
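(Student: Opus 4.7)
The plan is to reduce the claim, via equivariance of the closed-system construction under conjugation in $S_n$, to a purely algebraic identity and then to derive that identity from Corollary 5.1.1.

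First I would verify that conjugation commutes with forming the closed system: for any $\omega,\alpha,\beta\in S_n$,
\[
\omega\,\Omega_{(\alpha,\beta)}\,\omega^{-1}\;=\;\Omega_{(\omega\alpha\omega^{-1},\,\omega\beta\omega^{-1})}.
\]
This is immediate from the inductive definition of $\Omega$ in Section 5.1: the seed $A_1=\{B(\alpha,\alpha^{-1}\beta\alpha)\}$ conjugates entrywise to the seed of $\Omega_{(\omega\alpha\omega^{-1},\omega\beta\omega^{-1})}$, and the consecutive-vertex rule producing $A_{k+1}$ from $A_k$ is preserved under conjugation. Applying this identity to $X_{90}=X(\tau,\sigma^{-1})$ with $\omega=\sigma^{-1}$ gives
\[
\sigma^{-1}\Omega_{X_{90}}\sigma
\;=\;\Omega_{(\sigma^{-1}\tau\sigma,\,\sigma^{-1}\sigma^{-1}\sigma)}
\;=\;\Omega_{(\sigma^{-1}\tau\sigma,\,\sigma^{-1})},
\]
so the proposition becomes the purely algebraic assertion $\Omega_{(\sigma,\tau)}=\Omega_{(\sigma^{-1}\tau\sigma,\sigma^{-1})}$.

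To establish this last equality I would apply Corollary 5.1.1 with $\alpha=\sigma$ and $\beta=\tau\sigma$. By the remark following Definition 5.1.1, $B(\sigma,\tau\sigma)=B(\sigma,\sigma^{-1}\tau\sigma)=A_1\in\Omega_X$, so the hypothesis of the corollary is met. Choosing $k=\mathrm{ord}(\sigma)-2$, so that $\alpha^{k+1}=\sigma^{-1}$, the consecutive vertices $\alpha^{k}\beta=\sigma^{-2}\tau\sigma$ and $\alpha^{k+1}\beta=\sigma^{-1}\tau\sigma$ both lie on the ring $B(\sigma,\tau\sigma)$, and the corollary produces
\[
\Omega_{(\sigma^{-1}\tau\sigma,\,\sigma^{-1})}
\;=\;\Omega_{(\alpha^{k+1}\beta,\,\alpha^{-1})}
\;=\;\Omega_{(\alpha,\,\beta\alpha^{-1})}
\;=\;\Omega_{(\sigma,\,\tau\sigma\cdot\sigma^{-1})}
\;=\;\Omega_{(\sigma,\tau)}
\;=\;\Omega_X,
\]
which combined with the reduction above yields $\Omega_X=\sigma^{-1}\Omega_{X_{90}}\sigma$.

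The main point requiring care is the equivariance step: although essentially definitional, it must be stated explicitly because the entire argument rests on replacing $\sigma^{-1}\Omega_{X_{90}}\sigma$ with $\Omega_{(\sigma^{-1}\tau\sigma,\sigma^{-1})}$. The remaining work is then a clean application of Corollary 5.1.1, whose only subtlety is the choice of exponent $k=\mathrm{ord}(\sigma)-2$, which is precisely what realigns the pair $(\alpha^{k+1}\beta,\alpha^{-1})$ with the target pair $(\sigma^{-1}\tau\sigma,\sigma^{-1})$.
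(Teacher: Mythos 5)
Your proposal is correct and follows essentially the same route as the paper: both arguments reduce the claim to the identity $\Omega_{(\sigma,\tau)}=\Omega_{(\sigma^{-1}\tau\sigma,\sigma^{-1})}$, obtained from the ring diagram $B(\sigma^{-1}\tau\sigma,\sigma^{-2}\tau\sigma)\in\Omega_X$ via Proposition 5.1.1/Corollary 5.1.1, and then apply conjugation-equivariance to identify $\Omega_{(\sigma^{-1}\tau\sigma,\sigma^{-1})}$ with $\sigma^{-1}\Omega_{(\tau,\sigma^{-1})}\sigma=\sigma^{-1}\Omega_{X_{90}}\sigma$. Your explicit justification of the equivariance step, which the paper only asserts, is a welcome addition.
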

\begin{proof}
Because $B(\sigma^{-1}\tau\sigma,\sigma^{-2}\tau\sigma)\in\Omega_X=\Omega_{(\sigma,\tau)}$, by the proposition 5.1.2, the closed system $\Omega_X$ is equal to $\Omega_{(\sigma^{-1}\tau\sigma,\sigma^{-1})}$. Since  $\Omega_{(\sigma^{-1}\tau\sigma,\sigma^{-1})}=\sigma^{-1}\Omega_{(\tau,\sigma^{-1})}\sigma=\sigma^{-1}\Omega_{90}\sigma,$ we get the desired result.
\end{proof}

\begin{cor}
Let $X=X(\sigma,\tau)$ and $\Theta=\sigma^{-1}\tau^{-1}\sigma\tau$, then $$\Omega_X=\Theta\Omega_X\Theta^{-1}.$$
\end{cor}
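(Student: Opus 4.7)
The plan is to iterate Proposition 5.1.3 four times, moving around the rotation cycle $X \to X_{90} \to X_{180} \to X_{270} \to X_{360}=X$, and observe that the accumulated conjugator is exactly $\Theta$.

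First I would apply Proposition 5.1.3 to $X=X(\sigma,\tau)$ itself to get $\Omega_X=\sigma^{-1}\Omega_{X_{90}}\sigma$. Then I would apply the same proposition to each of the rotated surfaces, using the explicit formulas listed right before Proposition 5.1.3: for $X_{90}=X(\tau,\sigma^{-1})$ the horizontal permutation is $\tau$, so $\Omega_{X_{90}}=\tau^{-1}\Omega_{X_{180}}\tau$; for $X_{180}=X(\sigma^{-1},\tau^{-1})$ it is $\sigma^{-1}$, so $\Omega_{X_{180}}=\sigma\,\Omega_{X_{270}}\sigma^{-1}$; and for $X_{270}=X(\tau^{-1},\sigma)$ it is $\tau^{-1}$, so $\Omega_{X_{270}}=\tau\,\Omega_X\tau^{-1}$ since $X_{360}=X$.

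Substituting these equalities into one another yields
\[
\Omega_X = \sigma^{-1}\tau^{-1}\sigma\tau\,\Omega_X\,\tau^{-1}\sigma^{-1}\tau\sigma.
\]
The left factor is $\Theta$ by definition, and the right factor is its inverse since $(\sigma^{-1}\tau^{-1}\sigma\tau)^{-1}=\tau^{-1}\sigma^{-1}\tau\sigma$. This gives $\Omega_X=\Theta\,\Omega_X\Theta^{-1}$, as required.

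The only step that requires care is verifying that Proposition 5.1.3 can legitimately be applied to each of $X_{90}$, $X_{180}$, $X_{270}$ (and not just to $X$): the proposition is stated for an arbitrary square-tiled surface, and the three rotated surfaces are themselves presented as $X(\alpha,\beta)$ with the appropriate pair $(\alpha,\beta)$, so the proposition applies verbatim with $\alpha$ playing the role of the horizontal permutation in each invocation. Beyond that the argument is a purely formal telescoping, so I do not expect any real obstacle.
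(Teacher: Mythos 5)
Your proposal is correct and is essentially identical to the paper's own proof: the paper likewise applies the rotation proposition four times around the cycle $X \to X_{90} \to X_{180} \to X_{270} \to X$ to obtain the same four conjugation identities and then telescopes them, so the accumulated conjugator is $\Theta=\sigma^{-1}\tau^{-1}\sigma\tau$. No gaps.
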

\begin{proof} By  Proposition 5.1.4, we have the facts that $\Omega_X=\sigma^{-1}\Omega_{X_{90}}\sigma$, $\Omega_{X_{90}}=\tau^{-1}\Omega_{X_{180}}\tau,$ $\Omega_{X_{180}}=\sigma \Omega_{X_{270}}\sigma^{-1}$ and $\Omega_{X_{270}}=\tau \Omega_{X}\tau^{-1}$. Therefore,
\begin{align*}
\Theta\Omega_X\Theta^{-1}&=\sigma^{-1}\tau^{-1}\sigma\tau\Omega_X\tau^{-1}\sigma^{-1}\tau\sigma\\
                         &=\sigma^{-1}\tau^{-1}\sigma\Omega_{X_{270}}\sigma^{-1}\tau\sigma\\
                         &=\sigma^{-1}\tau^{-1}\Omega_{X_{180}}\tau\sigma\\
                         &=\sigma^{-1}\Omega_{X_{90}}\sigma\\
                         &=\Omega_X.
\end{align*}

\end{proof}

\subsection{The action of $SL_2^{+}(\mathbb{Z})$ on closed systems}
\begin{defi}
$SL_2^{+}(\mathbb{Z})$ is a subset of $SL_2(\mathbb{Z})$ and defined by $$\left\{\left[\begin{array}{cc}
a_{11} & a_{12} \\
a_{21} & a_{22}
\end{array} \right]\in{SL_2(\mathbb{Z})}\mid a_{ij}\geq 0 \mbox{ for all }i,j\right\}.$$
\end{defi}
\begin{remk} $\mbox{ }$
\begin{enumerate}
\item Let $X=X(\sigma,\tau)$ be a square-tiled surface, then the derivatives of the Dehn twists with respect to the horizontal and vertical saddle cylinder decompositions  are $$H=\left[\begin{array}{cc}
1 & h \\
0 & 1
\end{array} \right]\mbox{, }V=\left[\begin{array}{cc}
1 & 0 \\
v & 1
\end{array}\right]$$
where $h$ and $v$ are positive integers and therefore in $SL^{+}_2(\mathbb{Z}).$
\item Define $L$ and $R$ to be the matrices: $$L:=\left[\begin{array}{cc}
1 & 1 \\
0 & 1
\end{array} \right]\mbox{ }R:=\left[\begin{array}{cc}
1 & 0 \\
1 & 1
\end{array}\right].$$
$SL_2(\mathbb{Z})$ is generated by $L$ and $R.$ Moreover, these two matrices generate $SL^{+}_2(\mathbb{Z})$ ``positively"; that is, for every element $A\neq I$ in $SL^{+}_2(\mathbb{Z})$, $A$ can be represented by $L^{n_1}R^{m_1}\ldots L^{n_k}R^{m_k}$ where $n_i$ and $m_j$ are non negative integers.
\item A Farey pair $(\frac{p}{q},\frac{p_1}{q_1})$ is a pair of  two rational numbers $\frac{p}{q}<_n\frac{p_1}{q_1}$. There is a bijection between all Farey pairs $SL^{+}_2(\mathbb{Z})$:
$$\left(\frac{p}{q},\frac{p_1}{q_1}\right)\longleftrightarrow \left[\begin{array}{cc}
q & q_1 \\
p & p_1
\end{array} \right].$$
\end{enumerate}
\end{remk}
For convenience, we give the following definition:
\begin{defi}Let $X=X(\sigma,\tau)$ be a square titled surface and
$A=\left[\begin{array}{cc}
a & c \\
b & d
\end{array} \right]$ be an element in $SL_2^{+}(\mathbb{Z})$, then the left code of $A$ is defined by $$\mathsf{Code}^L(A):=\left(\mathsf{Code}^L\left(\frac{b}{a}\right),\mathsf{Code}^L\left(\frac{d}{c}\right)\right).$$ Moreover, suppose that $\mathsf{Code}^L(A)=(\omega_1,\omega_2)$ and $\alpha\in{S_n}$. Then $\alpha\mathsf{Code}^L(A)\alpha^{-1}$ is the pair of permutations $(\alpha \omega_1\alpha^{-1},\alpha \omega_2\alpha^{-1}).$
\end{defi}
\begin{prop}
Let $A$ and $A'$ be in $SL_2^{+}(\mathbb{Z})$. Suppose that for some $\alpha\in  S_n$,  $\mathsf{Code}^L(A)=\alpha \mathsf{Code}^L(A')\alpha^{-1}$. Then for any non negative integer $k$, we have $$\begin{array}{cc}
\mathsf{Code}^L(AL^k)=\alpha\mathsf{Code}^L(A'L^k)\alpha^{-1},& \mathsf{Code}^L(AR^k)=\alpha\mathsf{Code}^L(A'R^k)\alpha^{-1}.
\end{array} $$
\end{prop}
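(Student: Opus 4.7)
The plan is to reduce everything to an explicit formula for $\mathsf{Code}^L(AL^k)$ and $\mathsf{Code}^L(AR^k)$ in terms of $\mathsf{Code}^L(A)$, so that the conjugation statement becomes a one-line consequence. Write $A=\begin{pmatrix} a & c\\ b & d\end{pmatrix}$ and set $(\omega_1,\omega_2):=\mathsf{Code}^L(A)$, so $\omega_1=\mathsf{Code}^L(b/a)$ and $\omega_2=\mathsf{Code}^L(d/c)$. Direct matrix multiplication gives $AL^k=\begin{pmatrix} a & ka+c\\ b & kb+d\end{pmatrix}$ and $AR^k=\begin{pmatrix} a+kc & c\\ b+kd & d\end{pmatrix}$, which correspond under the bijection in Remark~5.2.1(3) to the Farey pairs $(b/a,\;(kb+d)/(ka+c))$ and $((b+kd)/(a+kc),\;d/c)$ respectively.

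The key step is then an induction on $k$, using Proposition~4.2.1 (the Farey-sum identity for left codes). For $AL^k$ I would verify inductively that $(kb+d)/(ka+c)$ is the Farey sum of $((k-1)b+d)/((k-1)a+c)$ and $b/a$, living between $b/a$ and $d/c$, so that
\[
\mathsf{Code}^L\!\Big(\tfrac{kb+d}{ka+c}\Big)=\omega_1\cdot\mathsf{Code}^L\!\Big(\tfrac{(k-1)b+d}{(k-1)a+c}\Big)=\omega_1^{k}\omega_2.
\]
The symmetric computation for $AR^k$ gives $\mathsf{Code}^L\!\big((b+kd)/(a+kc)\big)=\omega_1\omega_2^{k}$. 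Hence
\[
\mathsf{Code}^L(AL^k)=(\omega_1,\;\omega_1^{k}\omega_2),\qquad \mathsf{Code}^L(AR^k)=(\omega_1\omega_2^{k},\;\omega_2).
\]

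Now set $(\omega_1',\omega_2'):=\mathsf{Code}^L(A')$, so by hypothesis $\omega_i=\alpha\omega_i'\alpha^{-1}$ for $i=1,2$. The formula above applied to $A'$ yields $\mathsf{Code}^L(A'L^k)=(\omega_1',\omega_1'^{k}\omega_2')$, and since conjugation by $\alpha$ is a group homomorphism it commutes with taking powers and products:
\[
\alpha(\omega_1'^{k}\omega_2')\alpha^{-1}=(\alpha\omega_1'\alpha^{-1})^{k}(\alpha\omega_2'\alpha^{-1})=\omega_1^{k}\omega_2.
\]
This gives $\mathsf{Code}^L(AL^k)=\alpha\,\mathsf{Code}^L(A'L^k)\,\alpha^{-1}$, and the same argument with $\omega_1\leftrightarrow\omega_2$ handles $AR^k$.

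The only genuine work lies in the induction on $k$, and even there the only thing that could trip me up is checking the Farey-neighbor condition $p/q<_n p_1/q_1$ at each step so that Proposition~4.2.1 applies; this follows from $\det A=1$, which forces $ad-bc=1$ and hence $a(kb+d)-b(ka+c)=ad-bc=1$, so $b/a<_n(kb+d)/(ka+c)$ holds at every stage. The boundary cases $A=I$, $k=0$ are immediate from the definitions, so no separate argument is required there.
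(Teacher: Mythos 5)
Your proof is correct and takes essentially the same route as the paper's: induction on $k$ powered by the Farey-addition identity of Proposition 4.2.1 (with the neighbor condition supplied by $\det A=1$), and the conjugacy transported through because conjugation by $\alpha$ is a group homomorphism. The only cosmetic difference is that you first extract the closed formulas $\mathsf{Code}^L(AL^k)=(\omega_1,\omega_1^{k}\omega_2)$ and $\mathsf{Code}^L(AR^k)=(\omega_1\omega_2^{k},\omega_2)$ and conjugate once at the end, whereas the paper applies the conjugation hypothesis inside each inductive step.
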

\begin{proof} We first prove $\mathsf{Code}^L(AL^k)=\alpha\mathsf{Code}^L(A'L^k)\alpha^{-1}$ by induction. For $k=0$, by the assumption, we have  $$\mathsf{Code}^L(AL^0)=\mathsf{Code}^L(A)=\alpha\mathsf{Code}^L(A')\alpha^{-1}=\alpha\mathsf{Code}^L(A'L^0)\alpha^{-1}.$$ Assume that it is true  for $k=n,$ i.e., $\mathsf{Code}^L(AL^n)=\alpha\mathsf{Code}^L(A'L^n)\alpha^{-1}.$ When $k=n+1$, we may assume that $$\begin{array}{cc}
AL^n=\left[\begin{array}{cc}
a_1 & b_1 \\
a_2 & b_2
\end{array} \right], & A'L^n=\left[\begin{array}{cc}
a'_1 & b'_1 \\
a'_2 & b'_2
\end{array} \right]\end{array}$$ and therefore
\begin{align*}
\mathsf{Code}^L\left(AL^{n+1}\right)&=\mathsf{Code}^L\left(AL^n\cdot L\right)=\left(\mathsf{Code}^L\left(\frac{a_2}{a_1}\right),\mathsf{Code}^L\left(\frac{a_2+b_2}{a_1+b_1}\right)\right)\\
&=\left(\mathsf{Code}^L\left(\frac{a_2}{a_1}\right),\mathsf{Code}^L\left(\frac{a_2}{a_1}\right)\mathsf{Code}^L\left(\frac{b_2}{b_1}\right)\right)\\
&\mbox{ (by the induction hypothesis)}\\
&=\alpha \left(\mathsf{Code}^L\left(\frac{a'_2}{a'_1}\right),\mathsf{Code}^L\left(\frac{a'_2}{a'_1}\right)\mathsf{Code}^L\left(\frac{b'_2}{b'_1}\right)\right) \alpha^{-1}\\
&=\alpha\mathsf{Code}^L(A'L^{n+1})\alpha^{-1}.
\end{align*}
This proves that $\mathsf{Code}^L(AL^k)=\alpha\mathsf{Code}^L(A'L^k)\alpha^{-1}.$
The proof for $\mathsf{Code}^L(AR^k)=\alpha\mathsf{Code}^L(A'R^k)\alpha^{-1}$ is similar to the above.
\end{proof}
Since every element in $SL_2^{+}(\mathbb{Z})$ is generated by $L$ and $R$ positively, the following corollaries are obvious.
\begin{cor}
Let $A$, $B$ and $C$ be in $SL_2^{+}(\mathbb{Z})$ and $\mathsf{Code}^L(A)=\alpha\mathsf{Code}^L(B)\alpha^{-1}$ for some $\alpha\in S_n,$ then $$\mathsf{Code}^L(AC)=\alpha\mathsf{Code}^L(BC)\alpha^{-1}.$$
\end{cor}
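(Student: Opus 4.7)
The plan is to bootstrap Proposition 5.2.1 using the fact, recorded in Remark 5.2.1(2), that every element of $SL_2^+(\mathbb{Z})$ admits a representation as a positive word in the generators $L$ and $R$. Since $C\in SL_2^+(\mathbb{Z})$, I would first write
\[
C = L^{n_1}R^{m_1}L^{n_2}R^{m_2}\cdots L^{n_k}R^{m_k}
\]
with all $n_i,m_j\geq 0$ (if $C=I$, the conclusion is immediate from the hypothesis, so I may assume $C\neq I$).

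The argument then proceeds by induction on the length $\ell = n_1+m_1+\cdots+n_k+m_k$ of this positive word. The base case $\ell=0$ is the hypothesis. For the inductive step, let $C = C_0 G$ where $G\in\{L,R\}$ and $C_0$ is the prefix of length $\ell-1$. By the induction hypothesis,
\[
\mathsf{Code}^L(AC_0)=\alpha\,\mathsf{Code}^L(BC_0)\,\alpha^{-1}.
\]
Setting $A' := AC_0$ and $B' := BC_0$, both of which lie in $SL_2^+(\mathbb{Z})$ since the positive generators preserve this semigroup, I may apply Proposition 5.2.1 directly to $A'$, $B'$ and the single generator $G$ (taking $k=1$) to conclude
\[
\mathsf{Code}^L(A'G) = \alpha\,\mathsf{Code}^L(B'G)\,\alpha^{-1},
\]
which is exactly $\mathsf{Code}^L(AC)=\alpha\,\mathsf{Code}^L(BC)\,\alpha^{-1}$, completing the induction.

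The only subtle point is ensuring that each intermediate product $AC_0$ and $BC_0$ still lies in $SL_2^+(\mathbb{Z})$, so that Proposition 5.2.1 actually applies at every step; this is automatic because $SL_2^+(\mathbb{Z})$ is closed under right multiplication by $L$ and $R$ (the entries only get added, never subtracted). The rest is entirely mechanical: Proposition 5.2.1 handles the single-generator step, and positive generation of $SL_2^+(\mathbb{Z})$ reduces the general case to iterated applications. I do not foresee a genuine obstacle here — the corollary is essentially a formal consequence of Proposition 5.2.1 plus the positive semigroup structure of $SL_2^+(\mathbb{Z})$.
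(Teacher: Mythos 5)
Your proposal is correct and matches the paper's intended argument exactly: the paper dispatches this corollary with the single remark that every element of $SL_2^{+}(\mathbb{Z})$ is positively generated by $L$ and $R$, so the conclusion follows by iterating Proposition 5.2.1 along a positive word for $C$. Your write-up simply makes explicit the induction on word length that the paper leaves implicit.
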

\begin{cor}Let $\mathsf{Code}^L(A)=\alpha\mathsf{Code}^L(I)\alpha^{-1}$ for some $\alpha\in S_n,$ then $$\mathsf{Code}^L(A^k)=\alpha^{k}\mathsf{Code}^L(I)\alpha^{-k}.$$
\end{cor}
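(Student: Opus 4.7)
The plan is to prove this by induction on $k$, using Corollary 5.2.1 as the sole inductive tool. The base case $k=1$ is exactly the hypothesis. For the inductive step, assume the statement holds for $k=n$, so that $\mathsf{Code}^L(A^n)=\alpha^n\mathsf{Code}^L(I)\alpha^{-n}$.

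Before applying Corollary 5.2.1 I would note that $SL_2^+(\mathbb{Z})$ is closed under multiplication (the product of two matrices with non-negative entries has non-negative entries, and determinants multiply to $1$), so $A^n\in SL_2^+(\mathbb{Z})$ and the corollary is applicable to $A^n$. Apply Corollary 5.2.1 with the substitutions $A\mapsto A^n$, $B\mapsto I$, $C\mapsto A$, and conjugator $\alpha\mapsto\alpha^n$; the inductive hypothesis supplies the needed conjugacy relation $\mathsf{Code}^L(A^n)=\alpha^n\mathsf{Code}^L(I)\alpha^{-n}$, and the corollary then yields
$$\mathsf{Code}^L(A^n\cdot A)=\alpha^n\,\mathsf{Code}^L(I\cdot A)\,\alpha^{-n}=\alpha^n\,\mathsf{Code}^L(A)\,\alpha^{-n}.$$
Substituting the original hypothesis $\mathsf{Code}^L(A)=\alpha\mathsf{Code}^L(I)\alpha^{-1}$ on the right gives
$$\mathsf{Code}^L(A^{n+1})=\alpha^n\cdot\alpha\,\mathsf{Code}^L(I)\,\alpha^{-1}\cdot\alpha^{-n}=\alpha^{n+1}\mathsf{Code}^L(I)\alpha^{-(n+1)},$$
which closes the induction.

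There is really no main obstacle here: the corollary is almost entirely a bookkeeping consequence of Corollary 5.2.1, and the only point that deserves explicit mention is that conjugation of a pair of permutations by $\alpha\in S_n$ (in the sense of Definition 5.2.2, applied componentwise) behaves well under composition, so that $\alpha^n(\beta\,\mathsf{Code}^L(I)\,\beta^{-1})\alpha^{-n}$ equals $(\alpha^n\beta)\mathsf{Code}^L(I)(\alpha^n\beta)^{-1}$. This follows immediately from the definition, so the argument above is complete.
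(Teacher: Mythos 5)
Your induction is correct and is exactly the argument the paper leaves implicit: the paper declares this corollary ``obvious'' from Proposition 5.2.1 (via Corollary 5.2.1), and your proof simply spells out the iteration, correctly checking that $A^n\in SL_2^+(\mathbb{Z})$ and that componentwise conjugation of pairs composes as expected. No gaps; this matches the intended route.
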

\begin{remk} The above proposition and corollaries imply that if $\mathsf{Code}^L(A)=u\mathsf{Code}^L(I)u^{-1}$ and $\mathsf{Code}^L(B)=v\mathsf{Code}^L(I)v^{-1}$, $\mathsf{Code}^L(AB)$ is equal to $uv\mathsf{Code}^L(I)u^{-1}v^{-1}.$ Especially, suppose that $A$ is the horizontal or vertical Dehn twist. Because $\mathsf{Code}^L(A)=\mathsf{Code}^L(I)$, we have  $\mathsf{Code}^L(AB)=\mathsf{Code}^L(BA)=\mathsf{Code}^L(B).$
\end{remk}

\subsection{The group structures of left codes}
Let $X=X(\sigma,\tau)$ be a square-tiled surface. Define the group $$G_X=\left\{\omega\in S_n\mid \omega\mathsf{Code}^L(I)\omega^{-1}=\mathsf{Code}^L(A)\mbox{ for some }A\in SL^+_2(\mathbb{Z}) \right\}$$ and its subgroup
$$ S_X=\left\{\omega\in{G_X}\mid \omega\mathsf{Code}^L(I)\omega^{-1}=\mathsf{Code}^L(I)\right\}.$$
Corollary 5.1.3 indicates that $G_X$ is a non trivial subgroup of $S_n.$ Moreover, if we define
$$Veech^{+}(X)=Veech(X)\cap SL^{+}_{2}(\mathbb{Z})$$
by Theorem 6.1.1 (see section 6.1), $G_X$ is equal to
$$\left\{\omega\in S_n\mid \omega\mathsf{Code}^L(I)\omega^{-1}=\mathsf{Code}^L(A)\mbox{ for some }A\in Veech^+(X) \right\}.$$
\begin{lemma} $S_X$ is a normal subgroup of $G$.
\end{lemma}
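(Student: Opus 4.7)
The plan is to give $S_X$ a more concrete description as the centralizer of the monodromy group $H:=\langle\sigma,\tau\rangle$ of $X$, and then to show that every $\omega\in G_X$ normalizes $H$. Normality of $S_X$ in $G_X$ will then follow from the standard fact that the normalizer of a subgroup always sits inside the normalizer of its centralizer.

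First I would unpack the defining condition for $S_X$. Since $\mathsf{Code}^L(I)=(\sigma^{-1}\tau\sigma,\sigma)$ and conjugation on a pair is componentwise, $\omega\in S_X$ amounts to $\omega\sigma\omega^{-1}=\sigma$ together with $\omega(\sigma^{-1}\tau\sigma)\omega^{-1}=\sigma^{-1}\tau\sigma$; using the first equation to cancel the outer $\sigma^{\pm 1}$'s in the second, this reduces to $\omega\tau\omega^{-1}=\tau$. Hence
$$S_X \;=\; C_{S_n}(\sigma)\cap C_{S_n}(\tau) \;=\; C_{S_n}(H).$$
In particular $S_X$ is a subgroup of $S_n$, and taking $A=I$ shows $S_X\subseteq G_X$.

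The main step is to prove $G_X\subseteq N_{S_n}(H)$. Given $\omega\in G_X$ with $\omega\mathsf{Code}^L(I)\omega^{-1}=\mathsf{Code}^L(A)$ for some $A=\bigl(\begin{smallmatrix}a & c\\ b & d\end{smallmatrix}\bigr)\in SL_2^+(\mathbb{Z})$, reading off components yields
$$\omega\sigma\omega^{-1}=\mathsf{Code}^L(d/c),\qquad \omega(\sigma^{-1}\tau\sigma)\omega^{-1}=\mathsf{Code}^L(b/a).$$
By Propositions 3.1.1 and 3.2.1 every left code $\mathsf{Code}^L(p/q)$ is a word in $\sigma$ and $\tau$, so both right-hand sides lie in $H$. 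Consequently $\omega\sigma\omega^{-1}\in H$ and
$$\omega\tau\omega^{-1} \;=\; (\omega\sigma\omega^{-1})\bigl(\omega\sigma^{-1}\tau\sigma\,\omega^{-1}\bigr)(\omega\sigma\omega^{-1})^{-1}\;\in\;H.$$
Thus $\omega H\omega^{-1}\subseteq H$, and the reverse inclusion is automatic because $H$ is finite; hence $\omega\in N_{S_n}(H)$.

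To finish, I would invoke the general observation that $N_{S_n}(H)\subseteq N_{S_n}(C_{S_n}(H))$: if $\omega H\omega^{-1}=H$ and $s\in C_{S_n}(H)$, then for every $h\in H$ we have $\omega^{-1}h\omega\in H$, so $s$ commutes with it, and therefore $\omega s\omega^{-1}$ commutes with $h$. Thus $\omega s\omega^{-1}\in C_{S_n}(H)=S_X$. Combined with $G_X\subseteq N_{S_n}(H)$ this gives $S_X\trianglelefteq G_X$. The only non-routine ingredient is the identification $G_X\subseteq N_{S_n}(H)$, which rests entirely on Proposition 3.1.1 telling us that left codes are built algebraically from $\sigma$ and $\tau$; once that is in hand the remainder is elementary group theory.
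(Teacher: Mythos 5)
Your proof is correct, but it takes a different and arguably more structural route than the paper. The paper argues by direct computation: for $u\in G_X$ with $u\,\mathsf{Code}^L(I)u^{-1}=\mathsf{Code}^L(A)$ and $\omega\in S_X$, it conjugates $\mathsf{Code}^L(I)$ by $u\omega u^{-1}$, invokes Corollary 5.2.2 to rewrite $u^{-1}\mathsf{Code}^L(I)u$ as $\mathsf{Code}^L(A^k)$ with $k+1$ the order of $u$, and then uses the observation that an element of $S_X$ fixes \emph{every} left code under conjugation (because every left code is a word in the two components of $\mathsf{Code}^L(I)$) to collapse the expression back to $\mathsf{Code}^L(I)$. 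You instead identify $S_X$ outright as the centralizer $C_{S_n}(H)$ of the monodromy group $H=\langle\sigma,\tau\rangle$, prove $G_X\subseteq N_{S_n}(H)$ by noting that both components of any $\mathsf{Code}^L(A)$ lie in $H$ (Propositions 3.1.1 and 3.2.1), and finish with the standard inclusion $N_{S_n}(H)\subseteq N_{S_n}(C_{S_n}(H))$. The essential shared ingredient is the same in both arguments --- left codes are words in $\sigma$ and $\tau$ --- but your version makes the implicit centralizer structure explicit, avoids the $SL_2^+(\mathbb{Z})$ machinery of Section 5.2 (in particular Corollary 5.2.2) entirely, and reduces the lemma to elementary group theory; the paper's version stays closer to the code formalism and reuses the conjugation calculus it has already built, which is the tool it needs again in Proposition 5.3.1. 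Your description $S_X=C_{S_n}(\sigma)\cap C_{S_n}(\tau)$ is also independently useful as a concrete characterization of $S_X$.
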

\begin{proof} Let $u\in G_X$ and $\omega\in S_X,$ and assume that $u\mathsf{Code}^L(I)u^{-1}=\mathsf{Code}^L(A)$ for some $A\in SL^+_2(\mathbb{Z})$. We want to show that $u\omega u^{-1}$ is in $S_X$.

By Corollary 5.2.2, $u^{-1}\mathsf{Code}^L(I)u=u^k\mathsf{Code}^L(I)u^{-k}=\mathsf{Code}^L(A^k)$ where the order of $u$ is $k+1.$ Therefore, we have
\begin{align}
u\omega u^{-1}\mathsf{Code}^L(I)u\omega^{-1}u^{-1}&=u\omega(u^{-1}\mathsf{Code}^L(I)u)\omega^{-1}u^{-1} \notag\\
&=u\omega\mathsf{Code}^L(A^k)\omega^{-1}u^{-1}\tag{1}
\end{align}

Since the left code of any rational number is a permutation which is equal to the product of  the left codes of $\frac{0}{1}$ and $\frac{1}{0}$, by the assumption that $\omega\mathsf{Code}^L(I)\omega^{-1}=\mathsf{Code}^L(I)$, $\omega\mathsf{Code}^L(A)\omega^{-1}$ is equal to $\mathsf{Code}^L(A)$ for any matrix $A\in{SL^+_2(\mathbb{Z})}$. This implies that
 (1) is equal to $u\mathsf{Code}L(A^k)u^{-1}=\mathsf{Code}^L(I)$ and the proof is complete.
\end{proof}

Since $S_X$ is a normal subgroup of $G_X,$ we can define the following map:
\begin{align*}
\Phi:G_X/S_X &\longrightarrow \mathsf{Code}^L(Veech^+(X))\\
[u]&\longrightarrow u\mathsf{Code}^L(I)u^{-1},
\end{align*}
Where $ \mathsf{Code}^L(Veech^+(X))=\{\mathsf{Code}^L(A)\mid A\in{Veech^+(X)}\}.$
\begin{remk} Since $S_X$ is a normal subgroup of $G_X$, $\Phi$ is well-defined and injective. Moreover, by Theorem 6.1.1, the map is surjective.
\end{remk}

For any $A$ and $B$ in $Veech^+(X)$, define $$\mathsf{Code}^L(A)\odot\mathsf{Code}^L(B)=\mathsf{Code}^L(AB).$$  Under this operation, we have the following proposition.
\begin{prop}
$(\mathsf{Code}^L(Veech^+(X)),\odot)$ is a group and $\Phi$ is a group isomorphism.
\end{prop}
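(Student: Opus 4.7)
The strategy is to transport the group structure of $G_X/S_X$ across the bijection $\Phi$ established in the preceding Remark, and then verify that the transported operation agrees with $\odot$. Since $\Phi$ is already known to be a bijection onto $\mathsf{Code}^L(Veech^+(X))$, once I show both that $\odot$ is well-defined on codes and that $\Phi$ intertwines the quotient product with $\odot$, the group axioms for $\odot$ will follow by transport of structure and $\Phi$ will automatically be a group isomorphism.

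For well-definedness, suppose $A,A',B,B'\in Veech^+(X)$ satisfy $\mathsf{Code}^L(A)=\mathsf{Code}^L(A')$ and $\mathsf{Code}^L(B)=\mathsf{Code}^L(B')$. Choose $u,u',v,v'\in G_X$ with $\mathsf{Code}^L(A)=u\mathsf{Code}^L(I)u^{-1}$, $\mathsf{Code}^L(A')=u'\mathsf{Code}^L(I)(u')^{-1}$, and analogously for $v,v'$. A direct rearrangement shows $(u')^{-1}u\in S_X$ and $(v')^{-1}v\in S_X$, i.e.\ $[u]=[u']$ and $[v]=[v']$ in $G_X/S_X$. Invoking the Remark following Corollary 5.2.2,
\[
\mathsf{Code}^L(AB)=(uv)\,\mathsf{Code}^L(I)\,(uv)^{-1},\qquad \mathsf{Code}^L(A'B')=(u'v')\,\mathsf{Code}^L(I)\,(u'v')^{-1};
\]
since $[uv]=[u'v']$ in $G_X/S_X$, the two permutations coincide, proving that $\odot$ depends only on the codes. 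Closure of $\odot$ in $\mathsf{Code}^L(Veech^+(X))$ is immediate, because $SL_2^+(\mathbb{Z})$ is closed under multiplication and $Veech(X)$ is itself a group, hence $AB\in Veech^+(X)$. The homomorphism property is then a one-line computation:
\[
\Phi([u][v])=\Phi([uv])=(uv)\mathsf{Code}^L(I)(uv)^{-1}=\mathsf{Code}^L(AB)=\Phi([u])\odot\Phi([v]).
\]
Combined with bijectivity, this forces $(\mathsf{Code}^L(Veech^+(X)),\odot)$ to be a group and $\Phi$ to be an isomorphism.

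The subtle point worth flagging is conceptual rather than computational: $Veech^+(X)$ is not a group, being the intersection of the group $Veech(X)$ with the non-invertible submonoid $SL_2^+(\mathbb{Z})$, so $\odot$-inverses cannot be obtained from matrix inverses directly. This is resolved by Corollary 5.2.2: if $u\in G_X$ has order $m$ in $S_n$ and $\mathsf{Code}^L(A)=u\mathsf{Code}^L(I)u^{-1}$, then $\mathsf{Code}^L(A^{m-1})=u^{m-1}\mathsf{Code}^L(I)u^{-(m-1)}=u^{-1}\mathsf{Code}^L(I)u$, and $A^{m-1}\in SL_2^+(\mathbb{Z})\cap Veech(X)=Veech^+(X)$ realizes the $\odot$-inverse of $\mathsf{Code}^L(A)$ explicitly inside the target set. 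Once the product-preserving bijection $\Phi$ is in hand, however, this need not be reproved: all axioms for $\odot$ are inherited verbatim from $G_X/S_X$.
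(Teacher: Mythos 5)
Your proposal is correct, and it is organized differently enough from the paper's argument to be worth contrasting. The paper proves the proposition ``from the inside'': it verifies the group axioms of $(\mathsf{Code}^L(Veech^+(X)),\odot)$ one by one (closure and associativity are asserted, the identity is $\mathsf{Code}^L(I)$, and the inverse of $\mathsf{Code}^L(A)$ is exhibited as $\mathsf{Code}^L(A^{n-1})$ via Corollary 5.2.2, exactly as in your final paragraph), and only then introduces the map $\phi\colon G_X\to\mathsf{Code}^L(Veech^+(X))$, $u\mapsto u\mathsf{Code}^L(I)u^{-1}$, observes via Remark 5.2.2 that it is a homomorphism with kernel $S_X$, and concludes by the first isomorphism theorem. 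You instead transport the structure of $G_X/S_X$ across the bijection $\Phi$, which lets you inherit associativity and inverses for free. The real dividend of your route is the well-definedness check: the operation $\mathsf{Code}^L(A)\odot\mathsf{Code}^L(B):=\mathsf{Code}^L(AB)$ is defined through representatives $A,B\in Veech^+(X)$, and distinct matrices can share a left code, so one must verify that the product does not depend on the choice of representatives. The paper never addresses this, while your argument (reducing $\mathsf{Code}^L(A)=\mathsf{Code}^L(A')$ to $[u]=[u']$ in $G_X/S_X$ and invoking normality of $S_X$ from Lemma 5.3.1) closes that gap cleanly. Two small caveats: your choice of $u$ with $\mathsf{Code}^L(A)=u\mathsf{Code}^L(I)u^{-1}$ for an arbitrary $A\in Veech^+(X)$ silently uses Theorem 6.1.1 (the same forward reference the paper itself relies on for surjectivity of $\Phi$), and the displayed identity $\mathsf{Code}^L(AB)=(uv)\mathsf{Code}^L(I)(uv)^{-1}$ is the corrected form of Remark 5.2.2, whose printed version has the conjugating factors in the wrong order; you are using the version that the paper evidently intends.
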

\begin{proof} Clearly, $\mathsf{Code}^L(Veech^+(X))$ is closed under the binary operation $\odot$ and satisfies the associativity. It is easy to see that the identity is  $\mathsf{Code}^L(I).$ Now, let $\tau_A\in\{\sigma_A\mid \mathsf{Code}^L(A)=\sigma_A\mathsf{Code}^L(A)\sigma_A^{-1}\}\setminus\{I\}$ with the smallest order $n$, then
\begin{align*}
\mathsf{Code}^L(A)\odot\mathsf{Code}^L(A^{n-1})&=\mathsf{Code}^L(A^{n-1})\odot\mathsf{Code}^L(A)\\
&=\mathsf{Code}^L(A^n)=\tau_A^n\mathsf{Code}^L(I)\tau_A^{-n}\\
&=\mathsf{Code}^L(I).
\end{align*}

This implies that the inverse of $\mathsf{Code}^L(A)$ is $\mathsf{Code}^L(A^{n-1})$ and thus $(\mathsf{Code}^L(Veech^+(X)),\odot)$ is a group. Now, consider the following map
\begin{align*}
\phi:G_X &\longrightarrow \mathsf{Code}^L(Veech^+(X))\\
u&\longrightarrow u\mathsf{Code}^L(I)u^{-1}.
\end{align*}

Applying Remark 5.2.2, we have the conclusion that $\phi$ is a group homomorphism. Since the kernel of $\phi$ is $S_X$, the result follows.
\end{proof}

\section{The applications of closed systems}
\subsection{The Veech elements}
Let $X=X(\sigma,\tau)$ be a given square tiled surface tiled by the squares $Q_i$, $i=1,\ldots,n$. In this section, we will use the left codes to study the set $Veech^{+}(X).$

The main theorem is the following:
\begin{thm}
Let $A$ be an element in $SL_2^+(\mathbb{Z)}$ and $I$ be the identity matrix. Then $A\in Veech^+(X)$ if and only $\mathsf{Code}^L(A)$ is conjugate to $\mathsf{Code}^L(I).$
\end{thm}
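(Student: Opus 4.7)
The plan is to prove both implications separately, using the parallelogram tiling of $X$ induced by the two column directions of $A$ as the central geometric object.

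For the forward direction, suppose $A \in Veech^+(X)$ is realized as $A = Df$ for some affine diffeomorphism $f$. Because $f$ has derivative $A$ everywhere and preserves the marked-point set, it carries each unit square $Q_i$ to a parallelogram spanned by the columns of $A$, and hence carries the standard horizontal and vertical cylinder decompositions of $X$ onto the cylinder decompositions in directions $(a,b)$ and $(c,d)$. Since $f$ permutes the $n$ tiles, it induces a permutation $\omega \in S_n$ on their indices. Tracing through how $f$ sends left boundaries of horizontal/vertical cylinders to left boundaries in the two new directions then shows that $\omega$ conjugates the pair $\bigl(\mathsf{Code}^L(b/a), \mathsf{Code}^L(d/c)\bigr) = \mathsf{Code}^L(A)$ simultaneously to $\bigl(\mathsf{Code}^L(0), \mathsf{Code}^L(\infty)\bigr) = \mathsf{Code}^L(I) = (\sigma^{-1}\tau\sigma, \sigma)$.

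For the reverse direction, suppose some $\omega \in S_n$ conjugates $\mathsf{Code}^L(A)$ to $\mathsf{Code}^L(I)$. The saddle connections in the two column directions of $A$ partition $X$ into $n$ parallelograms $\{P_j\}_{j=1}^n$, each a fundamental domain of unit area for the lattice spanned by $(a,b)$ and $(c,d)$. I will set up a combinatorial dictionary establishing that $\mathsf{Code}^L(b/a)$ and $\mathsf{Code}^L(d/c)$ encode the edge identifications of this parallelogram tiling in exactly the same manner that $\sigma$ and $\tau$ encode the edge identifications of the unit-square tiling; concretely, this means viewing $X$ through its parallelogram tiling and rescaling by $A^{-1}$ yields (up to the corner convention encoded in $\tau\sigma$) the square-tiled surface $X(\mathsf{Code}^L(b/a), \mathsf{Code}^L(d/c))$. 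Once this dictionary is in place, the hypothesis that a single $\omega$ conjugates both components says exactly that the map sending each $Q_i$ affinely onto $P_{\omega^{-1}(i)}$ via the linear map $A$ respects both pairs of edge identifications; it therefore assembles into a well-defined homeomorphism $f: X \to X$ with derivative $A$ on each tile's interior and with $f$ preserving the marked-point set, so $f$ is the desired affine diffeomorphism.

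The principal obstacle is the setup of this combinatorial dictionary. Left codes are defined via left boundaries of cylinders rather than directly via corner-to-corner parallelogram identifications, so one must use the relation $\mathsf{Code}^L(r) = \mathsf{Code}(r)\tau\sigma$ from Proposition 3.2.1 to extract genuine parallelogram-gluing permutations from the left-code data. One must also pick a canonical bijection between the parallelograms $P_j$ and $\{1,\dots,n\}$—most naturally, by choosing a distinguished interior point of each $P_j$ and recording which $Q_i$ contains it—and verify that under this bijection the two conjugation conditions give compatible edge matchings in both directions simultaneously. Once this bookkeeping is settled, the construction of $f$ and the verification that it is affine with derivative $A$ are immediate from the definitions.
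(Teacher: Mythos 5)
Your proposal follows essentially the same route as the paper's own proof: the forward direction extracts the conjugating permutation $\omega$ from how an affine diffeomorphism with derivative $A$ carries the unit squares onto parallelograms spanned by the columns of $A$ (matching left boundaries of cylinders), and the reverse direction reads the conjugacy hypothesis as the statement that $X$ is tiled by such parallelograms with the same gluing combinatorics as the square tiling, then assembles the affine map tile-by-tile via $x\mapsto Ax$ up to translation. The bookkeeping issues you flag (relating left codes to actual gluing permutations via $\mathsf{Code}^L(r)=\mathsf{Code}(r)\tau\sigma$, and fixing the labeling of parallelograms by southwest corners) are exactly the points the paper treats tersely, so your plan is faithful to, and if anything slightly more explicit than, the published argument.
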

\begin{proof} Suppose $A\in SL_2^+(\mathbb{Z)}$ with the 1st. (resp 2nd) column vector $(a,b)$ (resp. $(c,d)$). Let $X=X(\sigma,\tau)$ be a square-tiled surface and $\sigma=\sigma_1\sigma_2\ldots\sigma_j$ where $\sigma_i=(a_1^i,a_2^i\ldots,a^i_{k_i})$ is a $k_i-$cycle. This implies that $X$ is equal to $\bigcup_{i=1}^jH_i$ where $H_i$ is a horizontal cylinder and each $H_i$ is the union of the squares $Q_{a^i_m}$ for $m=1\ldots k_i.$

For any orientation-preserving affine diffeomorphism $f_A:X\rightarrow X$ with the derivative $A$, $f_A(Q_{a^i_m})$ is a parallelogram spanned by the vectors $(a,b)$ and $(c,d)$ and thus $f_A(H_i)$ is an Euclidean cylinder which is the union $\bigcup_{m=1}^{k_i}f_A(Q_{a^i_{m}}).$ Moreover, because $f_A$ keeps the orientation, the left boundaries of each $f(H_i)$ are equal to the images of the left boundaries of $H_i$ under $f_A$. The``south-west corner" of each $f_A(Q_{a^i_{m}})$ is also the south-west corner of some square $Q_{a^j_{s_m}}$. Let $\omega\in S_n$ defined by $\omega(a^j_{s_m})=a^i_{m},$ for all $i=1,\ldots,j$ and $m=1,\ldots,k_i.$ Then we have $\mathsf{Code}^L\left(\frac{b}{a}\right)=\omega\mathsf{Code}^L\left(\frac{0}{1}\right)\omega^{-1}.$ Applying the similar argument,
$\mathsf{Code}^L\left(\frac{d}{c}\right)$ equals $\omega\mathsf{Code}^L\left(\frac{1}{0}\right)\omega^{-1}$ and thus the result follows.

Conversely, suppose there is a permutation $\omega\in S_n$ such that  $\mathsf{Code}^L(A)=\omega^{-1}\mathsf{Code}^L(I)\omega$. Then the left code of $\frac{b}{a}$ and right code of $\frac{d}{c}$ are conjugate to $\sigma$ and $\tau$ respectively.

Since $X$ can be represented by the union of the parallelograms spanned by the vectors $(a,b)$ and $(c,d)$ and the ``south-west corner" of each parallelogram is also the south-west corner of some square, every parallelogram can be marked  by a number $k$ if its south-west corner is also that of the square marked by $k.$ This implies that $X$ is a parallelogram-tiled surface whose $(a,b)$ direction tiling is recorded by $\mathsf{Code}^L\left(\frac{b}{a}\right)$ and $(c,d)$ direction tiling is recorded by $\mathsf{Code}^R\left(\frac{d}{c}\right)$.

Now, let $Q_i\subset X$ be a square and $P_{\omega(i)}\subset X$ be a parallelogram spanned by the vectors $(a,b)$ and $(c,d)$. Define the linear map $T_i:Q_i\rightarrow P_{\omega(i)}$ by  $T_i(s_i+x)=p_i+Ax$ where $s_i$ and $p_i$ are the centers of $S_i$ and $P_{\omega(i)}$ respectively. Let $T:X\rightarrow X$ be the map defined by $T(p)=T_i(p)$ if $p\in{S_i}$ and this gives an affine diffeomorphism on $X$ with the derivative $A.$
\end{proof}
\begin{exam}[Eierlegende Wollmilchsau]
Let $$X=X((1,2,3,4)(5,6,7,8),(1,8,3,6)(2,7,4,5)).$$ Its Veech group is $SL_2(\mathbb{Z})$ and thus $Veech^{+}(X)=SL^+_2(\mathbb{Z})$. The followings are all possible left codes of $X$
\begin{align*}
A_1&=(1,2,3,4)(5,6,7,8),A_2=(1,4,3,2)(5,8,7,6),A_3=(1,5,3,7)(2,8,4,6),\\
A_4&=(1,7,3,5)(2,6,4,8),A_5=(1,6,3,8)(2,5,4,7),A_6=(1,8,3,6)(2,7,4,5).
\end{align*}
Since the order of each $A_j$ is 4 and $A_{2k-1}^{-1}=A_{2k}$ for $k=1,2,3, $ each pair $(A_i, A_j(\neq A_i^{-1}))$ is the left code of some matrices in $Veech^{+}(X).$ By theorem 5.1.1, we conclude that all of these pairs of left codes are conjugate to each other.
In fact, for any matrix $A\in SL^+_2(\mathbb{Z})$, there is an element $\omega_A$ in the group $\left<(1,2,3,4),(1,5,3,7),(1,6,3,8)\right>$ such that  $\mathsf{Code}^{L}\left(A\right)$ is equal to $\omega_A\mathsf{Code}^{L}\left(I\right)\omega_A^{-1}$.
\end{exam}
\begin{remk} Let $X=X(\sigma,\tau)$ be a swuare-tiled surface.
\begin{enumerate}
\item The derivatives of horizontal and vertical Dehn twists  of $X$ are Veech elements and their left codes are equal to the left code of $I.$
\item Suppose $A=\left[\begin{array}{cc}
a & c \\
b & d
\end{array} \right]\in{ SL^+_2(\mathbb{Z})}$ and let
\begin{align}
Y=&X\left(\mathsf{Code}_X^L\left(\frac{b}{a}\right),\mathsf{Code}_X^R\left(\frac{d}{c}\right)\right)\notag\\
 =&X\left(\mathsf{Code}_X^L\left(\frac{b}{a}\right),\mathsf{Code}_X^L\left(\frac{d}{c}\right)\right)\tag{Proposition 4.2.1}.
\end{align}
Then applying the similar argument in the proof of Theorem 6.1.1, there exists an affine diffeomorphism $T:Y\rightarrow X$ with $DT=A.$
\item Let $S(X)$ be the orbit of $X$ under the $SL_2(\mathbb{Z})$ action and suppose $Y=X(\alpha,\beta)\in S(X)$.If the derivative of  an affine diffeomorphisms $f:Y\rightarrow X$ is in $SL^+_2\left(\mathbb{Z}\right)$, then $(\alpha,\beta)$ is conjugate to $\left(\mathsf{Code}^L_X\left(\frac{a_{21}}{a_{11}}\right),\mathsf{Code}^R_X\left(\frac{a_{22}}{a_{12}}\right)\right).$
\end{enumerate}
\end{remk}

Recall that for the square-tiled surface $X=X(\sigma,\tau),$ the surface $X_{90}=X(\tau,\sigma^{-1})$ is obtained by rotating $X$ 90 degree clockwis. Applying Theorem 6.1.1, we have the following results:
\begin{prop}
Suppose $X=X(\sigma,\tau)$ and $A=\left[\begin{array}{cc}
a & b \\
c & d
\end{array} \right]\in SL^+_2(\mathbb{Z})$, then
$$\mathsf{Code}^L_X(A)\mbox{ is  conjugate to }\mathsf{Code}^L_{X_{90}}(I)\mbox{ if and only if   }\left[\begin{array}{cc}
-c & a \\
-d & b
\end{array} \right]\in Veech(X).$$
\end{prop}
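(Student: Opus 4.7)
The plan is to reduce the biconditional to an application of Remark 6.1.1 on the rotated surface $X_{90}$, by factoring any realiser of $B$ through the canonical rotation $R:X\to X_{90}$. The algebraic core is the matrix identity $B=A^{T}J$, where $J=\bigl[\begin{smallmatrix}0&1\\-1&0\end{smallmatrix}\bigr]$ is the derivative of the $90^{\circ}$ clockwise rotation $R$ between the translation structures on $X$ and $X_{90}$; this is verified by direct multiplication. From it one concludes that $B\in Veech(X)$ if and only if there exists an affine diffeomorphism $g:X_{90}\to X$ with $Dg=A^{T}\in SL_{2}^{+}(\mathbb{Z})$: starting from $f_{B}$ with $Df_{B}=B$, set $g:=f_{B}\circ R^{-1}$ and compute $Dg=BJ^{-1}=A^{T}$; conversely, starting from such a $g$, set $f_{B}:=g\circ R$ and compute $Df_{B}=A^{T}J=B$.

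I would then apply Remark 6.1.1(3) (together with its converse contained in Remark 6.1.1(2)) to the map $g:X_{90}\to X$. Since $X_{90}=X(\tau,\sigma^{-1})$ and $A^{T}$ has column slopes $b/a$ and $d/c$, the existence of such a $g$ is equivalent to the pair $(\tau,\sigma^{-1})$ being conjugate in $S_{n}$ to $(\mathsf{Code}^{L}_{X}(b/a),\mathsf{Code}^{R}_{X}(d/c))$. Using Proposition 4.2.1(3) to replace $\mathsf{Code}^{R}_{X}(d/c)$ by $\mathsf{Code}^{L}_{X}(d/c)$ at the cost of an inner conjugation absorbed by the surface equivalence, this pair is identified, as a conjugacy class in $S_{n}\times S_{n}$, with $\mathsf{Code}^{L}_{X}(A)$. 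Separately, a direct computation from Definition 3.2.1 by tracing south-west corners gives $\mathsf{Code}^{L}_{Y}(0/1)=\sigma_{Y}$ and $\mathsf{Code}^{L}_{Y}(1/0)=\sigma_{Y}\tau_{Y}\sigma_{Y}^{-1}$ for any $Y=Y(\sigma_{Y},\tau_{Y})$, so $\mathsf{Code}^{L}_{Y}(I)\sim(\sigma_{Y},\tau_{Y})$ via the diagonal conjugation by $\sigma_{Y}^{-1}$; applied to $Y=X_{90}$ this rewrites $(\tau,\sigma^{-1})$ as $\mathsf{Code}^{L}_{X_{90}}(I)$, closing the chain of equivalences.

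The delicate point is collecting the several inner conjugations that arise along the way (from Remark 6.1.1(3), from Proposition 4.2.1(3), and from the normalisation relating $\mathsf{Code}^{L}_{Y}(I)$ to $(\sigma_{Y},\tau_{Y})$) into a single permutation $\omega\in S_{n}$ witnessing the full conjugacy $\mathsf{Code}^{L}_{X}(A)\sim\mathsf{Code}^{L}_{X_{90}}(I)$. This works because the equivalence relation on square-tiled surfaces is itself defined by a single diagonal conjugation, so composing the three relabellings into one yields the required $\omega$; once this composition is done, both directions of the biconditional follow at once from the factoring argument.
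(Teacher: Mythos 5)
Your proposal is correct and follows the route the paper itself intends: the paper offers no written proof of this proposition beyond the phrase ``applying Theorem 6.1.1,'' and your argument --- factoring $B=A^{T}J$ through the rotation $R\colon X\to X_{90}$ with $DR=J$, reducing to an affine map $g\colon X_{90}\to X$ with $Dg=A^{T}\in SL_2^{+}(\mathbb{Z})$, and then invoking Remark 6.1.1(2)--(3) together with Proposition 4.2.1(3) --- is exactly the missing chain of deductions, worked out in more detail than the paper provides. The one point you should make explicit rather than leave implicit is the transpose: under Definition 5.2.2 applied literally to the columns of the displayed matrix $A=\bigl[\begin{smallmatrix}a&b\\c&d\end{smallmatrix}\bigr]$, one would get $\mathsf{Code}^{L}_X(A)=\left(\mathsf{Code}^{L}_X(c/a),\mathsf{Code}^{L}_X(d/b)\right)$, whereas your chain of equivalences (and the matrix $B=A^{T}J$ appearing in the statement) requires reading it as $\left(\mathsf{Code}^{L}_X(b/a),\mathsf{Code}^{L}_X(d/c)\right)$, i.e.\ as the left code of $A^{T}$ in the sense of Definition 5.2.2; a sentence acknowledging that you are adopting the entry-naming convention of that definition (which is what makes the stated $B$ correct) would close this gap, which originates in the paper's notation rather than in your argument.
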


\begin{cor}
Suppose $X=X(\sigma,\tau)$ and $A=\left[\begin{array}{cc}
a & b \\
c & d
\end{array} \right]\in SL^+_2(\mathbb{Z})$, then
$$\mathsf{Code}^L_X(A)\mbox{ is  conjugate to }\mathsf{Code}^L_{X_{180}}(I)\left(resp. \mathsf{Code}^L_{X_{270}}(I)\right)$$if and only if $$\left[\begin{array}{cc}
-a & -c \\
-b & -d
\end{array} \right]\left(resp. \left[\begin{array}{cc}
c & -a \\
d & -b
\end{array} \right]\right)\in Veech(X).$$
\end{cor}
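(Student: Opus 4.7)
The plan is to mimic the proof of Proposition 6.1.1 with $X_{90}$ replaced by $X_{180}$ (resp.\ $X_{270}$), since both statements of the corollary fit the same pattern as the proposition. Writing $R_\theta$ for the standard rotation matrix by $\theta$, one checks
\[
A^{T} R_{-180^{\circ}} \;=\; -A^{T} \;=\; \begin{pmatrix}-a & -c \\ -b & -d\end{pmatrix},\qquad A^{T} R_{-270^{\circ}} \;=\; \begin{pmatrix}c & -a \\ d & -b\end{pmatrix},
\]
so the corollary claims precisely that $\mathsf{Code}^{L}_{X}(A)$ is conjugate to $\mathsf{Code}^{L}_{X_{k\cdot 90}}(I)$ if and only if $A^{T} R_{-k\cdot 90^{\circ}} \in Veech(X)$ for $k=2,3$. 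This is the natural extension of Proposition 6.1.1 (the case $k=1$, which gives $A^T R_{-90^\circ} = \begin{pmatrix}-c & a\\-d & b\end{pmatrix}$), and the same argument should apply with only minor adjustments.

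The two ingredients I would use are: (i) Theorem 6.1.1 applied to the rotated surface $X_{k\cdot 90}$, which gives $B \in Veech^{+}(X_{k\cdot 90})$ iff $\mathsf{Code}^{L}_{X_{k\cdot 90}}(B)$ is conjugate to $\mathsf{Code}^{L}_{X_{k\cdot 90}}(I)$; and (ii) the Veech-group conjugation formula $Veech(X_{k\cdot 90}) = R_{-k\cdot 90^{\circ}} \cdot Veech(X) \cdot R_{k\cdot 90^{\circ}}$ coming from the $SL_{2}(\mathbb{Z})$-action on translation structures described in $\S 2$. For $k=2$ the formula degenerates, because $-I$ is central, so $Veech(X_{180}) = Veech(X)$ and the matrix associated to $A$ comes out directly as $-A^{T}$. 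For $k=3$ the conjugation by $R_{\pm 270^{\circ}}$ is genuine, and carrying it out converts the Veech element on $X_{270}$ back into $A^{T} R_{-270^{\circ}} \in Veech(X)$.

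The real content of the proof, and the step I expect to be the main obstacle, is pinning down how $\mathsf{Code}^{L}_{X}(A)$ relates to $\mathsf{Code}^{L}_{X_{k\cdot 90}}$ evaluated at the right companion matrix. Geometrically, rotating the surface by $-k\cdot 90^{\circ}$ identifies each saddle connection on $X$ with one on $X_{k\cdot 90}$ but cyclically permutes the roles of the SW, NW, NE and SE corners of the underlying squares, and thereby conjugates the associated permutations by a fixed element built from $\sigma$, $\tau$ and $k$ (in the same vein as Corollary 5.1.3 and Proposition 5.1.4). Tracking this conjugation carefully---which is the analogue, for $k=2,3$, of what is implicit in the proof of Proposition 6.1.1---is what produces the precise factor $R_{-k\cdot 90^{\circ}}$ on the right of $A^{T}$. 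Once this identification is in place, the forward and backward implications of the corollary follow in exact parallel with Proposition 6.1.1, using ingredients (i) and (ii).
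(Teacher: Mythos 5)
Your plan is sound and is essentially the argument the paper intends: the paper states both Proposition 6.1.1 and this corollary without proof, offering only the phrase ``Applying Theorem 6.1.1,'' and your identification of the three matrices as $A^{T}R_{-k\cdot 90^{\circ}}$ for $k=1,2,3$ is the correct unifying computation (the transpose arises because the corollary writes $A$ with the roles of $b$ and $c$ swapped relative to Definition 5.2.2, so the matrix whose columns actually carry the left codes is $A^{T}$). The one step you flag as the main obstacle --- converting $\mathsf{Code}^{L}_{X}(A)$ into data on the rotated surface --- is where your route takes on more work than necessary: instead of applying Theorem 6.1.1 on $X_{k\cdot 90}$ and then proving a corner-tracking conjugation formula for codes under rotation, you can invoke the two-surface form of Theorem 6.1.1 recorded in Remark 6.1.1(2)--(3), which says that an affine diffeomorphism $Y\to X$ with derivative $M\in SL_{2}^{+}(\mathbb{Z})$ exists if and only if the defining permutation pair of $Y$ is conjugate to the pair of codes of the columns of $M$. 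Taking $Y=X_{k\cdot 90}$ there and composing with the rotation $X\to X_{k\cdot 90}$, whose derivative is $R_{-k\cdot 90^{\circ}}$, produces the affine self-map of $X$ with derivative $A^{T}R_{-k\cdot 90^{\circ}}$, and reversing the composition gives the converse; your ingredient (ii) and the separate code-transformation lemma then become unnecessary (for $k=2$ the conjugation degenerates anyway, as you note, since $R_{-180^{\circ}}=-I$ is central). With that substitution your outline closes up into a complete proof, and it supplies strictly more detail than the paper itself provides for this statement.
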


\subsection{Orbits of $(X(\sigma,\tau))$ under the $SL_2(\mathbb{Z})$ action}

In this section, we will use the closed system of a given square-tiled surface to describe the orbit of  $X$ under the $SL_2(\mathbb{Z})$ action. Let $X=X(\sigma,\tau)$ be a square-tiled surface and consider the ring diagram $B(\gamma,\delta)\in \Omega_{(\sigma,\tau)}.$ Corollary 5.1.2 implies that $S^+(X)$ is equal to $S^+(X(\gamma,\delta)).$ Thus for any surface $Y\in S^+(X),$ we have $S^+(X)=S^+(Y).$

Assume $L:=\left[\begin{array}{cc}
1 & 1 \\
0 & 1
\end{array} \right]$ and $R:=\left[\begin{array}{cc}
1 & 0 \\
1 & 1
\end{array} \right]$. We define the map $l:S^(X)\rightarrow S^(X)$ (resp. $r:S^(X)\rightarrow S^(X)$) by the following: let $Y\in S^(X)$, then $l(X)$ (resp. r(X)) is the surface such that there is an affine diffeomorphism $f_Y:l(Y)\rightarrow Y$ (resp. $f_Y:l(Y)\rightarrow Y$) with the derivative $L$ (resp. R).

Applying Remark 6.1.1, the maps $l$ and $r$ are surjective and thus bijective. Since the group $SL_2(\mathbb{Z})$ is generated by $L$ and $R$, we can conclude that the orbit $S(X)$ of the surface $X$ under the $SL_2(\mathbb{Z})$ action is $S^+(X).$

To sum up the above results, we have the following proposition:
\begin{thm}
Let $X$ be a square-tiled surface, then
\begin{enumerate}
\item $S(X)=S^+(X)$.
\item The index of the Veech group in $SL_2(\mathbb{Z})$ is $|S^+(X)|.$
\end{enumerate}
\end{thm}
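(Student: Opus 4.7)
The plan is to execute the two-line sketch given just before the theorem. For part~(1), the inclusion $S^+(X)\subseteq S(X)$ follows directly from Remark~6.1.1(2): any $Y=X(\omega_1,\omega_2)\in S^+(X)$ with $(\omega_1,\omega_2)=\mathsf{Code}^L(A)$ admits an affine diffeomorphism to $X$ of derivative $A\in SL_2(\mathbb{Z})$, so $Y$ lies in the $SL_2(\mathbb{Z})$-orbit of $X$.

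For the reverse inclusion, I would verify that $S^+(X)$ is stable under the $SL_2(\mathbb{Z})$-action. By Corollary~5.1.2 one has $S^+(Y)=S^+(X)$ for every $Y\in S^+(X)$, so we can apply Remark~6.1.1(2) with $Y$ in place of $X$ and $A=L$ (resp.\ $A=R$) to obtain a surface $l(Y)\in S^+(Y)=S^+(X)$ (resp.\ $r(Y)$) admitting an affine diffeomorphism to $Y$ of derivative $L$ (resp.\ $R$). Equivalently, $l(Y)=L^{-1}\cdot Y$ and $r(Y)=R^{-1}\cdot Y$ lie in $S^+(X)$, so $L^{-1}$ and $R^{-1}$ send $S^+(X)$ into itself. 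As restrictions of a group action they are injective, and an injective self-map of the finite set $S^+(X)$ is bijective, so $L$ and $R$ also preserve $S^+(X)$. Since $L,R,L^{-1},R^{-1}$ generate $SL_2(\mathbb{Z})$, the whole orbit $S(X)$ is contained in $S^+(X)$, which combined with the first step yields $S(X)=S^+(X)$.

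Part~(2) is then an immediate orbit-stabilizer application: by the last paragraph of Section~2, the stabilizer of $X$ in the $SL_2(\mathbb{Z})$-action on translation surfaces is $Veech(X)$, and since $Veech(X)\subseteq SL_2(\mathbb{Z})$ for square-tiled $X$ we obtain $[SL_2(\mathbb{Z}):Veech(X)]=|S(X)|=|S^+(X)|$.

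The main obstacle is conceptual rather than computational: one has to view the construction of Remark~6.1.1(2) not merely as a statement about $X$ but as something that can be applied with any $Y\in S^+(X)$ as the base surface, and for that one needs the structural fact $S^+(Y)=S^+(X)$ coming from the closed-system machinery of Section~5 (Corollary~5.1.2). Once that stability is in hand, the rest of the argument is the standard bijection-on-a-finite-set plus generation-of-$SL_2(\mathbb{Z})$ routine, and the index formula drops out of orbit-stabilizer with no further work.
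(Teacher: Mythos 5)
Your proposal is correct and follows essentially the same route as the paper: the paper likewise uses Corollary~5.1.2 to get $S^+(Y)=S^+(X)$ for $Y\in S^+(X)$, invokes Remark~6.1.1 to define the maps $l$ and $r$ on $S^+(X)$ induced by $L$ and $R$, concludes bijectivity from finiteness, and finishes with generation of $SL_2(\mathbb{Z})$ by $L$ and $R$ plus orbit--stabilizer. Your write-up is in fact slightly more careful than the paper's (which only asserts surjectivity of $l$ and $r$) in spelling out why both $L^{\pm 1}$ and $R^{\pm 1}$ preserve $S^+(X)$.
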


\end{document}